\newtheorem{Theorem}{Theorem}[section]
\newtheorem{Proposition}[Theorem]{Proposition}
\newtheorem{Lemma}[Theorem]{Lemma}
\newtheorem{Corollary}[Theorem]{Corollary}
\theoremstyle{definition}
\newtheorem{Definition}[Theorem]{Definition}
\newtheorem{Remark}[Theorem]{Remark}
\newcommand{\bTheorem}[1]{
	\begin{Theorem} \label{T#1} }
	\newcommand{\eT}{\end{Theorem}}
\newcommand{\bProposition}[1]{
	\begin{Proposition} \label{P#1}}
	\newcommand{\eP}{\end{Proposition}}
\newcommand{\bLemma}[1]{
	\begin{Lemma} \label{L#1} }
	\newcommand{\eL}{\end{Lemma}}
\newcommand{\bCorollary}[1]{
	\begin{Corollary} \label{C#1} }
	\newcommand{\eC}{\end{Corollary}}
\newcommand{\bRemark}[1]{
	\begin{Remark} \label{R#1} }
	\newcommand{\eR}{\end{Remark}}
\newcommand{\bDefinition}[1]{
	\begin{Definition} \label{D#1} }
	\newcommand{\eD}{\end{Definition}}
\newcommand{\Del}{\Delta_x}
\newcommand{\Ds}{\mathbb{D}_x}
\newcommand{\vuB}{\vc{u}_B}
\newcommand{\bFormula}[1]{
	\begin{equation} \label{#1}}
	\newcommand{\eF}{\end{equation}}
\newcommand{\Ov}[1]{\overline{#1}}
\newcommand{\aleq}{\stackrel{<}{\sim}}
\newcommand{\ageq}{\stackrel{>}{\sim}}
\newcommand{\vr}{\varrho}
\newcommand{\vre}{\vr_\ep}
\newcommand{\vte}{\vt_\ep}
\newcommand{\vue}{\vu_\ep}
\newcommand{\tvu}{{\wtilde \vu}}
\newcommand{\tvt}{\wtilde \vt}
\newcommand{\vt}{\vartheta}
\newcommand{\vu}{\vc{u}}
\newcommand{\vc}[1]{{\bf #1}}
\newcommand{\Div}{{\rm div}_x}
\newcommand{\Grad}{\nabla_x}
\newcommand{\dx}{\,{\rm d} {x}}
\newcommand{\dt}{\,{\rm d} t }
\newcommand{\intO}[1]{\int_{\Omega} #1 \ \dx}
\newcommand{\D}{{\rm d}}
\newcommand{\ep}{\varepsilon}
\newcommand{\vtB}{\vt_B}
\newcommand{\br}{ \nonumber \\ }
\def\softd{{\leavevmode\setbox1=\hbox{d}%
		\hbox to 1.05\wd1{d\kern-0.4ex{\char039}\hss}}}
\definecolor{Cgrey}{rgb}{0.85,0.85,0.85}
\definecolor{Cblue}{rgb}{0.50,0.85,0.85}
\definecolor{Cred}{rgb}{1,0,0}
\definecolor{fancy}{rgb}{0.10,0.85,0.10}
\definecolor{amaranth}{rgb}{0.9, 0.17, 0.31}
\newcommand\Cbox[2]{%
	\newbox\contentbox%
	\newbox\bkgdbox%
	\setbox\contentbox\hbox to \hsize{%
		\vtop{
			\kern\columnsep
			\hbox to \hsize{%
				\kern\columnsep%
				\advance\hsize by -2\columnsep%
				\setlength{\textwidth}{\hsize}%
				\vbox{
					\parskip=\baselineskip
					\parindent=0bp
					#2
				}%
				\kern\columnsep%
			}%
			\kern\columnsep%
		}%
	}%
	\setbox\bkgdbox\vbox{
		\color{#1}
		\hrule width  \wd\contentbox %
		height \ht\contentbox %
		depth  \dp\contentbox
		\color{black}
	}%
	\wd\bkgdbox=0bp%
	\vbox{\hbox to \hsize{\box\bkgdbox\box\contentbox}}%
	\vskip\baselineskip%
}
\newcommand{\wtilde}{\widetilde}
\begin{document}


\title{\bf Local existence and conditional regularity for the Navier--Stokes--Fourier system driven by inhomogeneous boundary conditions}

\author{Anna Abbatiello\thanks{The work of A.A. is performed under the auspices of the Italian National Group of the Mathematical Physics (GNFM) of INdAM.} \and Danica Basari\' c \thanks{The work of D.B. was supported by the PRIN project 2022  ``Partial differential equations and related geometric-functional inequalities", financially supported by the EU, in the framework of the ``Next Generation EU initiative". The Department of Mathematics of Politecnico di Milano is supported by MUR ``Excellence Department 2023-2027".} \and Nilasis Chaudhuri \thanks{
The work of N.C. was supported by the ``Excellence Initiative Research University (IDUB)" program at the University of Warsaw.
} \and
Eduard Feireisl \thanks{The work of E.F. was partially supported by the
		Czech Sciences Foundation (GA\v CR), Grant Agreement
		24--11034S. The Institute of Mathematics of the Academy of Sciences of
		the Czech Republic is supported by RVO:67985840. E.F. is member of the Ne\v cas Center for Mathematical Modelling. } }

\date{}

\maketitle

\vspace{-5mm}
\medbreak

\centerline{University of Campania ``L.~Vanvitelli", Department of Mathematics and Physics}
\centerline{Viale A.~Lincoln 5, 81100 Caserta, Italy}

\centerline{anna.abbatiello@unicampania.it}

\medskip 

\centerline{Politecnico di Milano, Department of Mathematics}
\centerline{Via E. Bonardi 9, 20133 Milano, Italy}

\centerline{danica.basaric@polimi.it}

\medskip

\centerline{University of Warsaw, Faculty of Mathematics, Informatics and Mechanics}
\centerline{Stefana Banacha 2, 02-097 Warsaw, Poland} 
	
\centerline{nchaudhuri@mimuw.edu.pl}

\medskip

\centerline{Institute of Mathematics, Czech Academy of Sciences}
\centerline{\v Zitn\' a 25, 115 67 Praha 1, Czech Republic}

\centerline{feireisl@math.cas.cz} 

\begin{abstract}
	
We consider the Navier--Stokes--Fourier system with general inhomogeneous Dirichlet--Neumann boundary conditions. We propose a new approach 
to the local well--posedness problem based on conditional regularity estimates. By conditional regularity we mean that any strong solution
belonging to a suitable class remains regular as long as its amplitude remains bounded. The result holds for general Dirichlet--Neumann boundary conditions 
provided the material derivative of the velocity field vanishes on the boundary of the physical domain. As a corollary of this result we obtain:
\begin{itemize}
	\item Blow up criteria for strong solutions
	\item Local existence of strong solutions in the optimal  $L^p-L^q$ framework
	\item Alternative proof of the existing results on local well posedness 
\end{itemize}

\end{abstract}

\bigskip

{\small

\noindent
{\bf 2020 Mathematics Subject Classification:}{
(primary)35Q35, 35Q30;
(secondary) 76N10, 76N06 .}

\medbreak
\noindent {\bf Keywords:} Navier--Stokes--Fourier system, strong solution, Dirichlet problem, local existence, conditional regularity, blow--up criterion

\tableofcontents

}

\section{Introduction}
\label{i}

We propose a new approach to problems of well posedness of systems of equations arising in fluid dynamics based on {\it a priori} bounds resulting from 
conditional regularity estimates. The method consists in several steps: 

\begin{enumerate}
\item 
First, a local in time existence of smooth solutions is established in a class of very regular functions in the spirit of Nash's original paper \cite{NAS}.	
In fact, we use a more recent ``energy framework'' proposed by Matsumura and Nishida \cite{MANI}, later developed by Valli \cite{Vall2}, \cite{Vall1}, 
Valli and Zajaczkowski \cite{VAZA}, or Kagei and Kawashima \cite{KagKaw}. The energy method is based on a simple Hilbertian framework based on 
suitable scales of Sobolev spaces, where the desired level of regularity is achieved by successive differentiation of the field equations in the time variable. 
As a result, high regularity of the data is necessary accompanied by several levels of compatibility conditions to be satisfied. 

\item Probably optimal results can be achieved in the more recent framework of $L^p-L^q$ regularity spaces used in the seminal paper of Solonnikov \cite{SoloI}, and later revisited and developed by Danchin \cite{DanchCPDE}, \cite{Danch2010}, and Kotschote \cite{KOT6}. The $L^p-L^q$ class seems optimal both with respect to the required regularity of the data and the number of compatibility conditions.

\item We propose a new approach to the local existence in the $L^p-L^q$ setting based on suitable {\it a priori} bounds in terms of the data and 
a suitable \emph{control} functional evaluated in terms of suitable norms of the local solution. The essential ingredient of the method is 
weak continuity (compactness) of the control functional with respect to the norm of the function space in which the desired solution lives. A typical example 
of such a functional is the amplitude -- $L^\infty$-norm of the solution -- provided the underlying solution space is compactly embedded in the space of continuous function. With this type of estimates at hand, the local solution can be obtained in the desired class on a time interval determined by ``hitting'' time 
of the control functional. 

\item Besides the new local existence results, our approach yield a blow--up criterion in terms of the amplitude of the local solution in the 
spirit of the celebrated Nash's conjecture, cf. Nash \cite{Nash2}, and \cite{FeWeZh}.   

\end{enumerate}

The paper is organized as follows. In Section \ref{p}, we introduce the Navier--Stokes--Fourier system along with the relevant initial/boundary 
conditions and state our main results. Various concepts of strong solution are introduced in Section \ref{L}. Section \ref{CR} presents preliminary conditional regularity estimates. In Section \ref{LE} we establish the local existence result in the $L^p-L^q$ framework. The proof of conditional regularity and blow-up 
criterion are completed in Section \ref{R}.   

\section{Problem formulation and main results}
\label{p}

We introduce the initial/boundary value problem for the Navier--Stokes--Fourier system and state our main results.

\subsection{Navier--Stokes--Fourier system}

The time evolution of the mass density $\vr = \vr(t,x)$, the (absolute) temperature 
$\vt = \vt(t,x)$, and the velocity $\vu = \vu(t,x)$ of a viscous, compressible, and heat conducting fluid is governed by the \emph{Navier--Stokes--Fourier} (NSF) \emph{system} 
of equations:
\begin{align}
\partial_t \vr + \Div (\vr \vu ) &= 0, \label{p1} \\ 
\partial_t( \vr \vu) + \Div (\vr \vu \otimes \vu) + \Grad p (\vr, \vt) &= 
\Div \mathbb{S}(\Ds \vu) + \vr \Grad G, 
\label{p2}\\ 
\partial_t (\vr e(\vr, \vt)) + \Div(\vr e (\vr, \vt) \vu) + \Div \vc{q}(\Grad \vt) &= \mathbb{S} (\Ds \vu) : \Ds \vu - p(\vr, \vt) \Div \vu
\label{p3}
	\end{align}
satisfied for $(t,x) \in (0,T) \times \Omega$, $\Omega \subset R^3$, { where $\Ds \vu$ denotes the symmetric velocity gradient:
\begin{equation*}
	\Ds \vu \equiv \frac{1}{2} \left( \Grad \vu + \Grad^t \vu \right).
\end{equation*}
}
	
The pressure $p$ and the internal energy $e$ are related to the density and the 
temperature through the conventional \emph{Boyle--Mariotte} equation of state (EOS):
\begin{align} \label{p4}
p = p(\vr, \vt) = \vr \vt,\ e = e(\vt) = c_v \vt,\ c_v > 0.
\end{align}

The viscous stress $\mathbb{S}$ is given by \emph{Newton's rheological law}, 
while the heat flux $\vc{q}$ obeys \emph{Fourier's law}:
\begin{align} 
	\mathbb{S} &= \mathbb{S}(\Ds \vu) = \mu \Big( \Grad \vu + \Grad^t \vu - 
	\frac{2}{3} \Div \vu \mathbb{I} \Big) + \lambda \Div \vu, \label{p5} \\
	\vc{q} &= \vc{q}(\Grad \vt) = - \kappa \Grad \vt,    	
\label{p6}
\end{align}	
with constant transport coefficients $\mu > 0$, $\lambda \geq 0$, $\kappa > 0$.

The original state of the system is determined by the initial conditions 
\begin{equation} \label{p7}
	\vr(0, \cdot) = \vr_0,\ 
	\vt(0, \cdot) = \vt_0, \ \vu(0, \cdot) = \vu_0.
\end{equation}
In addition, we suppose the fluid occupies a bounded sufficiently smooth domain $\Omega \subset R^3$ and impose the Dirichlet boundary conditions for the velocity 
\begin{equation} \label{p8}
	\vu|_{\partial \Omega} = \vuB, \ \vuB \cdot \vc{n} = 0
\end{equation}
{ where $\vc{n}$ denotes the outer normal vector to $\partial\Omega$.}
Finally, we assume the boundary $\partial \Omega$ can be decomposed as
\begin{equation} \label{deco}
\partial \Omega = \Gamma_D \cup \Gamma_N,\ \Gamma_D \cap \Gamma_N = \emptyset, \ \Gamma_D, \ \Gamma_N \ \mbox{compact},
\end{equation}
where
\begin{equation} \label{p8a}  
	\vt|_{\Gamma_D} = \vtB,\ 
	\Grad \vt \cdot \vc{n}|_{\Gamma_N} = q_B.
\end{equation}
The boundary data $\vu_B = \vuB(x)$, $\vtB = \vtB(x)$, and $q_B = q_B(x)$ are independent of the time. The hypothesis $\vu_B$ is tangential to the boundary 
plays a crucial role in the analysis and could possibly be relaxed by imposing a more restrictive regularity criterion.

\subsection{Strong solutions in the $L^p-L^q$ framework}

Following the seminal paper by Solonnikov \cite{SoloI}, we look for solutions in the $L^p-L^q$ class. Specifically, 
\begin{align}
\vr &\in C([0,T]; W^{1,q}(\Omega)),\hspace{0.7cm} \partial_t \vr \in L^p(0,T; L^q(\Omega), \label{PP1} \\
\vt &\in L^p(0,T; W^{2,q}(\Omega)),\hspace{0.8cm} \partial_t \vt \in L^p(0,T; L^q(\Omega)), \label{PP2} \\
\vu &\in	L^p(0,T; W^{2,q}(\Omega;R^3)),\ \partial_t \vu \in L^p(0,T; L^q(\Omega;R^3)). \label{PP3}
	\end{align}  
Accordingly, 
\[
\vt \in C([0,T]; B^{2 (1 - \frac{1}{p})}_{q,p}(\Omega)),\quad 
\vu \in C([0,T]; B^{2 (1 - \frac{1}{p})}_{q,p}(\Omega; R^3)),  
\]
where the Besov space $B^{2 (1 - \frac{1}{p})}_{q,p}(\Omega))$ can be identified with the real interpolation space 
\[
B^{2 (1 - \frac{1}{p})}_{q,p}(\Omega) = \left[ L^q(\Omega); W^{2,q}(\Omega) \right]_{1 - \frac{1}{p},p},
\]
see Amann \cite[Section 5]{Amann1}, \cite[Chapter III, Theorem 4.10.2]{AMA}.

Our main result reads as follows. 
	\begin{Theorem}[{\bf NSF system - local solutions, blow up criterion}] \label{Tmain}
	
	Let $\Omega \subset R^3$ be a bounded domain, with the boundary of class $C^4$ admitting the decomposition \eqref{deco}. Let the exponents $p,q$,    
	\begin{equation} \label{LEa1} 
		3 < q < \infty, \ \frac{2q}{2q - 3} < p < \infty,
	\end{equation}
	be given.
	
	Let $G = G(x)$, $G \in W^{1,q}(\Omega)$ be a given potential. Let the initial data $(\vr_0, \vt_0, \vu_0)$ 
	and the boundary data $(\vtB, \vuB, q_B)$ belong to the class
	\begin{align}
		\vr_0 &\in W^{1,q}(\Omega), \hspace{0.7cm} \inf_{\Omega} \vr_0 \geq \underline{\vr} > 0, \label{PP4}\\ 
	    \vt_0 &\in B^{2 (1 - \frac{1}{p})}_{q,p}(\Omega), \hspace{0.25cm}\inf_{\Omega} \vt_0 \geq \underline{\vt} > 0, \label{PP5}\\
	    \vu_0 &\in B^{2 (1 - \frac{1}{p})}_{q,p}(\Omega;R^3), \label{PP6} \\
	    \vtB &\in W^{2 - \frac{1}{q}, q}(\Gamma_D),\ \inf_{\Gamma_D} \vtB \geq \underline{\vt} > 0, \label{PP7}\\ 
	    \vuB &\in W^{2 - \frac{1}{q}, q}(\partial \Omega; R^3),\ {\vuB \cdot \vc{n} = 0}, \label{PP8}\\ 
	    q_B &\in W^{1 - \frac{1}{q},q}(\Gamma_N),\ q_B \geq 0. \label{PP9}
	\end{align}
	In addition let the compatibility conditions 
	\begin{align} 
		\vu_0|_{\partial \Omega} &= \vuB, \label{L21} \\
		\vt_0|_{\Gamma_D} &= \vtB,\ 
		\Grad \vt_0 \cdot \vc{n}|_{\Gamma_N} = q_B \label{L22}
	\end{align}
	hold. Finally, suppose either $\Gamma_D \ne \emptyset$ or $q_B = 0$.

	Then the following holds.
	\begin{itemize}
	\item {\bf Local existence.} 
	There exists $T > 0$ such that the \textup{NSF} system \eqref{p1}--\eqref{p6}, 
	with the initial conditions \eqref{p7}, and the boundary conditions \eqref{p8}, \eqref{p8a}
	admits a unique solution $(\vr, \vt, \vu)$ in the class \eqref{PP1}--\eqref{PP3}. In addition, 
	\begin{equation} \label{PP10}
	\min_{[0,T] \times \Ov{\Omega}} \vr > 0,\ \min_{[0,T] \times \Ov{\Omega}} \vt > 0.
	\end{equation}
	
	\item{\bf Blow--up criterion.} 
	Suppose $G \in W^{2,q}(\Omega)$.
	Let 
	\[
	T_{\rm max} = \sup \left\{ T > 0 \ \Big|\ \mbox{the solution $(\vr, \vt, \vu)$ exists in}\ [0,T] \right\}.
	\]
	Then either $T_{\rm max} = \infty$ or 
	\begin{equation} \label{PP11}
	\limsup_{t \to (T_{\rm max})-} \left\| ( \vr, \vt, \vu ) (t, \cdot) \right\|_{C(\Ov{\Omega}; R^5)} = \infty.
		\end{equation}

	\end{itemize}
	
\end{Theorem}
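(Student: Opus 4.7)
The strategy is to combine the classical local existence theory in a high-regularity (Hilbertian/energy) class with the conditional regularity estimates announced in Section \ref{CR}, and then use an approximation procedure to reach the optimal $L^p$--$L^q$ framework. I would proceed in four stages.

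First, I would regularize the data. Given $(\vr_0,\vt_0,\vu_0)$ and $(\vtB,\vuB,q_B)$ in the classes \eqref{PP4}--\eqref{PP9} together with the compatibility conditions \eqref{L21}, \eqref{L22}, I would construct sequences of smooth data converging to them in the appropriate norms while preserving the compatibility conditions and the positivity lower bounds for $\vr_0$ and $\vt_0$; this is a standard but delicate mollification adapted to the Dirichlet--Neumann decomposition \eqref{deco}, and may require imposing additional higher-order compatibility conditions at the interface $\Gamma_D \cap \Gamma_N$, which are automatically satisfied for the mollified data. For each such smooth approximation, the Matsumura--Nishida--Valli--Zaja\c{c}zkowski energy scheme (or its Kagei--Kawashima refinement) produces a unique smooth local solution $(\vr_n,\vt_n,\vu_n)$ on some interval $[0,T_n]$, together with strict positivity of $\vr_n$ and $\vt_n$.

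Second, I would invoke the conditional regularity estimate from Section \ref{CR}. The core statement should be that as long as the amplitude $\|(\vr_n,\vt_n,\vu_n)(t,\cdot)\|_{C(\overline\Omega;R^5)}$ together with $1/\min\vr_n$ and $1/\min\vt_n$ remains bounded by some $M$, the solution satisfies an a priori bound in the class \eqref{PP1}--\eqref{PP3} depending only on $M$ and on norms of the data. Because the amplitudes of the approximate data converge to those of the original data via the embedding $B^{2(1-1/p)}_{q,p}(\Omega) \hookrightarrow C(\overline\Omega)$ (which holds under \eqref{LEa1}), a uniform hitting-time argument provides a common interval $[0,T]$ on which these controls are bounded uniformly in $n$, hence so is the $L^p$--$L^q$ norm.

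Third, I would pass to the limit $n \to \infty$. Uniform bounds in \eqref{PP1}--\eqref{PP3} combined with an Aubin--Lions type compactness (the embedding $W^{2,q}\hookrightarrow\hookrightarrow W^{1,q}$ and continuity in time) yield strong convergence of $(\vr_n,\vt_n,\vu_n)$ in spaces sufficient to pass to the limit in the nonlinear terms $\vr\vu\otimes\vu$, $p(\vr,\vt)$, $\vr e(\vr,\vt)\vu$, and $\mathbb{S}(\Ds\vu):\Ds\vu$. The limit is a solution of \eqref{p1}--\eqref{p6} in the desired class, satisfying \eqref{PP10}. Uniqueness is proved directly in the $L^p$--$L^q$ class by a Gronwall estimate on the differences of two solutions, exploiting the regularity $\vu\in L^p(0,T;W^{2,q})\hookrightarrow L^1(0,T;W^{1,\infty})$ to control the transport term in the continuity equation.

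Finally, the blow-up criterion follows from the same machinery. Assuming $T_{\max}<\infty$ and $\limsup_{t\to T_{\max}^-}\|(\vr,\vt,\vu)(t,\cdot)\|_{C(\overline\Omega;R^5)}<\infty$, the conditional regularity bound (together with a parabolic maximum/minimum principle argument ensuring $\inf\vr$ and $\inf\vt$ stay bounded away from zero) shows that $(\vr,\vt,\vu)(t,\cdot)$ is uniformly bounded in $B^{2(1-1/p)}_{q,p}$ on $[0,T_{\max})$, and in fact converges in this trace space as $t\to T_{\max}^-$; one can then restart the local existence theorem from time $T_{\max}-\delta$ and extend the solution past $T_{\max}$, a contradiction. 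The additional assumption $G\in W^{2,q}$ enters here because the restart requires the momentum equation to produce $\partial_t\vu\in L^p(0,T;L^q)$ already at the initial time, which needs one derivative more of $G$.

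The principal obstacle is the conditional regularity estimate itself, established in Sections \ref{CR} and \ref{R}: one has to close the $L^p$--$L^q$ bound on $(\vr,\vt,\vu)$ using only $L^\infty$ control of the amplitude, which demands maximal regularity for the parabolic subsystems (momentum with Dirichlet data $\vuB$, temperature with mixed Dirichlet--Neumann data) and a careful treatment of the nonlinear couplings via the transport equation for $\vr$. The hypothesis that $\vuB$ is tangential is precisely what makes the boundary compatible with transport of density along characteristics, so that $\vr$ inherits $W^{1,q}$ regularity from $\vu\in L^1(0,T;W^{2,q})$ without boundary correctors.
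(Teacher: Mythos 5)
Your overall architecture -- mollification of the data into Valli's class, smooth local solutions via Proposition \ref{PL1}, a hitting--time argument driven by conditional regularity, weak limit passage, and a restart argument for the blow--up alternative -- is the same as the paper's. There is, however, a genuine gap in your second stage, and it concerns exactly the quantity you propose to control. You run the hitting--time argument with the amplitude $\|(\vr_n,\vt_n,\vu_n)(t,\cdot)\|_{C(\Ov{\Omega};R^5)}$ (plus the lower bounds on $\vr_n,\vt_n$) as the sole control and assert that its boundedness by $M$ yields an a priori bound in the class \eqref{PP1}--\eqref{PP3} ``depending only on $M$ and on norms of the data.'' The only amplitude--only conditional estimates available before the existence theorem is proved are \eqref{c5a}, \eqref{c5} of Proposition \ref{PCR1}, and their right--hand sides involve the data norms in $\mathcal{D}_{ChK,I}$, resp.\ $\mathcal{D}_{V,I}$ (i.e.\ $W^{2,2}$, resp.\ $W^{3,2}$, norms of $\vt_0^\ep,\vu_0^\ep$). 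For mollifications of data that are merely in $B^{2(1-\frac{1}{p})}_{q,p}(\Omega)$ these norms blow up as $\ep \to 0$ (note that for $p$ close to $\frac{2q}{2q-3}$ the exponent $2(1-\frac{1}{p})$ is close to $\frac{3}{q}<1$, so $\mathcal{D}_{p,q,I}$ does not embed into $\mathcal{D}_{ChK,I}$), hence no bound uniform in the regularization parameter follows. An amplitude--only estimate whose data dependence is only through $\mathcal{D}_{p,q,I}$ is precisely Theorem \ref{TR1}; but that result is obtained \emph{after}, and \emph{by means of}, the existence part of Theorem \ref{Tmain} (via the Cho--Kim class, initial smoothing, and Lemma \ref{LR1}), and it cannot be proved directly because the sup--norm conditional estimates require differentiating the equations in time, which is not admissible in the $L^p$--$L^q$ class. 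Black--boxing it at the existence stage therefore makes your argument circular.

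The repair is the paper's actual choice of control functional,
\begin{equation*}
\mathcal{F}_\ep(\tau)=\|(\vre,\vte,\vue)(\tau,\cdot)\|_{C(\Ov{\Omega};R^5)}+\int_0^\tau \|(\vte,\vue)(t,\cdot)\|^p_{W^{1,\infty}(\Omega;R^4)}\dt .
\end{equation*}
With this enlarged functional, Proposition \ref{PCR2} provides an $\mathcal{S}_{p,q}$ bound depending only on the $\mathcal{D}_{p,q}$ norms of the data and on $M$, valid up to a time $T_{\rm eff}$ in \eqref{CR2} that is bounded below in terms of $M$ alone; and Lemma \ref{LLE1} (with embedding constants independent of $T$) shows the functional is still compact, so the truncated functionals $\Ov{\mathcal{F}}_\ep$ are precompact in $C[0,T_{\rm eff}]$ and one extracts a common interval $[0,T]$ on which $\mathcal{F}_\ep<M$. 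This equicontinuity step is also missing from your sketch: convergence of the amplitudes at $t=0$ alone does not prevent the hitting times from shrinking to zero. The same uniformity issue appears in your blow--up argument: restarting from $T_{\rm max}-\delta$ requires a lower bound on the existence time uniform over a bounded set of $\mathcal{D}_{p,q}$ data (Lemma \ref{LR1}), not merely boundedness of the trace. Finally, a small inaccuracy: $G \in W^{2,q}(\Omega)$ is assumed for the blow--up criterion because the mixed--class conditional estimates \eqref{c5a} involve $\|G\|_{W^{2,2}(\Omega)}$ (resp.\ $W^{2,q}$), not because of the value of $\partial_t\vu$ at the restart time.
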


\begin{Remark} \label{PR1}
The required regularity of $\partial \Omega$ is enforced by the method of construction and is not optimal. The result should hold for 
$\Omega$ of class $C^2$ or even $C^{1,1}$.
\end{Remark}	

\begin{Remark} \label{PR2}
Hypothesis \eqref{LEa1} allows to control the quadratic term $\mathbb{S}(\Ds \vu) : \Ds \vu$ in terms of the available parabolic 
estimates, cf. the proof of Proposition \ref{PCR2}. It is also necessary for the (compact) embedding 
\[
 B^{2 (1 - \frac{1}{p})}_{q,p}(\Omega) \hookrightarrow \hookrightarrow C(\Ov{\Omega}), 
\]	
cf. Amann \cite{Amann2000}. In particular, $T_{\rm max}$ is independent of $p,q$ as long as \eqref{LEa1} holds.
\end{Remark}	

\begin{Remark} \label{PR3}
While the positivity of the flux $q_B$ is used in the proof of minimum principle, see Section \ref{NM} below, the alternative 
$\Gamma_D \ne \emptyset$ or $q_B = 0$ is possibly only technical. In fact, it is used in Section \ref{CR} to extend the Neumann 
boundary condition on $\Gamma_N$ inside $\Omega$.	
	
	\end{Remark}

Theorem \ref{Tmain} extends the result of Kotschote \cite{KOT6} to the $L^p-L^q$ framework, $p \ne q$ in general. 
The blow--up criterion is new in the $L^p-L^q$ setting and the proof is based on a combination of three main ingredients:
\begin{itemize}
	\item Extension of the conditional regularity estimates obtained for more regular solutions 
	in \cite{BaFeMi} to a larger class introduced by Cho and Kim \cite{ChoKim1}.  
    \item New {\it a priori} estimates in the $L^p-L^q$ framework based on conditional regularity estimates. 
    \item Pasting the local solutions (and the corresponding estimates) in Cho--Kim class, with the solutions 
    in the $L^p-L^q$ class. 
\end{itemize}

The rest of the paper is devoted to the proof of Theorem \ref{Tmain} based on the above scenario. As a by product, we obtain 
local existence of solutions in the Cho and Kim class for inhomogeneous boundary data (Theorem \ref{TEL2}),  and new conditional regularity estimates in terms 
of the amplitude of strong solutions in the $L^p-L^q$ class (Theorem \ref{TR1}).

\section{New formulation and minimum principle}
\label{NM}

Using the specific EOS \eqref{p4}, we can rewrite \eqref{p1}--\eqref{p3} in the form 
\begin{align}
	\partial_t \vr + \vu \cdot \Grad \vr &= - \vr \Div \vu, \label{p1a}, \\ 
	\partial_t \vu + \vu \cdot \Grad \vu &= \frac{1}{\vr} \Div \mathbb{S}(\Ds \vu) 
	- \vt \Grad \log(\vr) - \Grad \vt + \Grad G, 
	\label{p2a}\\ 
	\partial_t \vt + \vu \cdot \Grad \vt  &= \frac{\kappa}{c_v \vr} \Del \vt + \frac{1}{c_v \vr}\mathbb{S}(\Ds \vu): \Ds \vu - \frac{1}{c_v} \vt \Div \vu. 
	\label{p3a}
\end{align}

\subsection{Minimum principle for the density}

The formulation \eqref{p1a}--\eqref{p3a} anticipates strict positivity of the density $\vr$. 
Formally, this property follows if the same restriction is imposed on the initial data, cf. \eqref{PP4}. If the velocity field is regular enough, the continuity equation yields 
\begin{equation} \label{p4a}
\vr(\tau,\cdot) \geq \min_\Omega \vr_0 \exp\left( - \int_0^\tau \| \Div \vu \|_{L^\infty(\Omega)} \dt \right),
\end{equation}
{for any $\tau \in [0,T]$.} The classes of strong solutions considered in this paper always allow for the Lagrangean formulation of the equation of continuity. Specifically, the velocity field always satisfies 
\begin{equation} \label{p3b}
\vu \in L^1(0,T; W^{1,\infty}(\Omega, R^3)).
\end{equation}
In particular, the density will be always bounded below away from zero as long as this is true initially. This justifies the lower bound on the density claimed in \eqref{PP10}.  

\subsection{Minimum principle for the temperature}

Similarly, we multiply equation \eqref{p3a} on $\exp \left( \frac{1}{c_v} \int_0^\tau 
\| {\Div \vu} \|_{L^\infty(\Omega)} \right)$ obtaining  
\begin{align}
	\vr \partial_t \Theta + \vr \vu \cdot \Grad \Theta - { \frac{\kappa}{c_v}} \Del \Theta &\geq 0 
	\ \mbox{in}\ (0,T) \times \Omega, \label{p4b}\\ \Theta(\tau, \cdot)|_{\Gamma_D} &= \exp \left( \frac{1}{c_v} \int_0^\tau 
	\| {\Div \vu} \|_{L^\infty(\Omega)} \right) \vtB
	,\br 
	\Grad \Theta (\tau, \cdot) \cdot \vc{n}|_{\Gamma_N}  &= \exp \left( \frac{1}{c_v} \int_0^\tau 
	\| {\Div \vu} \|_{L^\infty(\Omega)} \right) q_B, 
\nonumber
\end{align}
where we have set 
\[
\Theta = \vt \exp \left( \frac{1}{c_v} \int_0^\tau 
\| {\Div \vu} \|_{L^\infty(\Omega)} \right).
\]

Next, consider a function $F$, 
\[
F : R \to [0, \infty) \ \mbox{convex},  \ 
F(z) = 0 \ \mbox{for}\ z \geq 0,\ F'(z) < 0 \ \mbox{for}\ z < 0. 
\]
Multiplying	\eqref{p4b} on $F'(\Theta - \underline{\vt})$, where 
\[
\underline{\vt} = \min \left\{ \min_\Omega \vt_0, \min_{\Gamma_D} \vtB \right\},
\]
and integrating by parts, we get 
\[
\frac{\D }{\dt} \intO{ \vr F(\Theta - {\underline{\vt}}) } \leq 
{\frac{\kappa}{c_v}} \int_{\Gamma_N} q_B F'(\Theta- \underline{\vt} ) \D \sigma  
\]
Here, we have used the impermeability of the boundary $\vu \cdot \vc{n}|_{\partial \Omega} = \vuB \cdot \vc{n} = 0$. Thus if $q_B \geq 0$ we conclude, similarly to 
\eqref{p4a}, 
\begin{equation} \label{p5a}
	\vt(\tau,\cdot) \geq \min \left\{ \min_\Omega \vt_0,\min_{\Gamma_D} \vtB \right\} \exp\left( - \int_0^\tau \frac{1}{c_v}\| \Div \vu \|_{L^\infty(\Omega)} \dt \right), 
\end{equation}
{for any $\tau \in [0,T]$.} In view of hypotheses \eqref{PP5}, \eqref{PP7}, the temperature remains strictly positive as claimed in \eqref{PP10}.

\section{Classes of strong solutions}
\label{L}

Unfortunately, the proof of Theorem \ref{Tmain} cannot be performed in the 
function spaces of type $L^p-L^q$ in a direct manner. Instead, we have to introduce intermediate 
spaces of more regular strong solutions. 

\subsection{Hilbert space -- energy framework}	

First, we consider the Hilbert space setting introduced by 
Valli \cite{Vall2}, \cite{Vall1} and Valli, Zajaczkowski \cite{VAZA}. Specifically, the velocity and the temperature belong to the class:
\begin{align} 
\vt &\in L^2(0,T; W^{4,2}(\Omega)), \hspace{0.75cm} \partial^2_{tt} 
\vt \in L^2(0,T; L^2(\Omega)),	\label{L2} \\ 
\vu &\in L^2(0,T; W^{4,2}(\Omega;R^3)), \ \partial^2_{tt} \vu 
\in L^2(0,T; L^2(\Omega;R^3)). \label{L3}
	\end{align}
In particular, given $\vu$ satisfying \eqref{L3}, the time evolution of the density is uniquely determined by the equation of continuity \eqref{p1a}, 
\begin{equation} \label{L1}
	\vr \in C([0,T]; W^{3,2}(\Omega)), \ \partial_t \vr \in C([0,T]; W^{2,2}(\Omega))
\end{equation}	 	
as long as $\vr_0 \in W^{3,2}(\Omega)$.
	
Accordingly, we introduce the \emph{solution} space in the Valli class,  
\[
\mathcal{S}_V[0,T] = \left\{ (\vr, \vt, \vu) \Big| 
\ (\vr, \vt, \vu) \ \mbox{belong to the class}\ \eqref{L2}-\eqref{L1} \right\}.
\]	
The class $\mathcal{S}_V[0,T]$ endowed with the norm 
\begin{align} 
\| (\vr, \vt, \vu) \|_{\mathcal{S}_V[0,T]} &= \sup_{t \in [0,T]} \| \vr {(t, \cdot)} \|_{W^{3,2}(\Omega)} +
\sup_{t \in [0,T]} \| \partial_t \vr {(t, \cdot)} \|_{W^{2,2}(\Omega)} \br
&+
 \| (\vt, \vu) \|_{L^2(0,T; W^{4,2}(\Omega; R^4))}
+ \| \partial^2_{tt}(\vt, \vu)\|_{L^2(0,T; L^{2}(\Omega; R^4))} 
\label{L3a}
\end{align}
is a Banach space.
The data belong to the associated trace spaces:
\begin{align}
\vr_0 &\in W^{3,2}(\Omega),\ \vt_0 \in W^{3,2}(\Omega),\ 
\vu_0 \in W^{3,2}(\Omega; R^3), \label{L4} \\
\vt_B &\in W^{\frac{7}{2},2}(\Gamma_D)),\ 
\vu_B \in W^{\frac{7}{2},2}(\partial \Omega; R^3)), \label{L7} \\	
{q}_B &\in W^{\frac{5}{2},2}(\Gamma_N)) .
\label{L8a}
	\end{align}
Similarly to the above, we introduce the \emph{data} spaces in the Valli class:
\[
	\mathcal{D}_{V,I} = \left\{ (\vr_0, \vt_0, \vu_0) \ \Big|\
	(\vr_0, \vt_0, \vu_0) \ \mbox{belong to the class \eqref{L4}} \right\}, 
\]
with the norm 
\begin{equation} \label{L4a}
\| (\vr_0, \vt_0, \vu_0) \|_{\mathcal{D}_{ V,I}} = \| (\vr_0, \vt_0, \vu_0) \|_{W^{3,2}(\Omega; R^5)};
\end{equation}
and 
\[
\mathcal{D}_{V,B} = \left\{ (\vt_B, \vu_B, {q}_B) \ \Big|\
(\vt_B, \vu_B, {q_B}) \ \mbox{belong to the class \eqref{L7}, \eqref{L8a}} \right\}, 
\]
with the norm
\begin{equation} \label{L4b}
	\| (\vt_B, \vu_B, {q_B}) \|_{\mathcal{D}_{V,B}} = \| \vt_B \|_{W^{\frac{7}{2},2}(\Gamma_D)} + 
\| \vu_B \|_{W^{\frac{7}{2},2}(\partial \Omega; R^3)} + \| {q}_B \|_{W^{\frac{5}{2},2}(\Gamma_N)}.
\end{equation}
Note that 
\begin{equation} \label{L4ba}
\mathcal{S}_V [0,T] \hookrightarrow C([0,T]; \mathcal{D}_{V,I}).
	\end{equation}	
	
The following result was proved by Valli \cite[Theorem A]{Vall1}, see also \cite[Remark 3.3]{Vall1}.

\begin{Proposition}[{\bf Local existence - $L^2-$framework}] \label{PL1}
	Let $\Omega \subset R^3$ be a bounded domain of class $C^4$, and let $G = G(x)$, $G \in W^{3,2}(\Omega)$ be a given potential. Let the initial data $(\vr_0, \vt_0, \vu_0)$ satisfy \eqref{L4}, and let
	the boundary data {$( \vt_B, \vu_B, q_B)$ } belong to the class \eqref{L7}--\eqref{L8a}, where 
	\[
		\min_{\Omega} \vr_0 \geq \underline{\vr} > 0,\ 
		\min_{\Omega} \vt_0 \geq \underline{\vt} > 0,\ { \min_{\Gamma_D} \vt_B \geq \underline{\vt} >0}, \ q_B \geq 0.
	\]
	In addition let the compatibility conditions
	\begin{align} 
		&\quad \vt_0|_{\Gamma_D} = \vtB, \ \vu_0|_{\partial \Omega} = \vuB, \
		\Grad \vt_0 \cdot \vc{n}|_{\Gamma_N} = q_B, \label{L9} \\
		&\left( - 
		\vu_0 \cdot \Grad \vu_0 - \frac{1}{\vr_0} \Grad p(\vr_0, \vt_0) + \frac{1}{\vr_0} \Div \mathbb{S}(\Ds \vu_0) + \Grad G   \right) \Big|_{\partial \Omega} = 0,  \label{L11}\\ 
		&\left( - \vu_0 \cdot \Grad \vt_0 - \frac{1}{c_v \vr_0} \Div (\kappa \Grad \vt_0) + \frac{1}{c_v \vr_0} \mathbb{S} (\Ds \vu_0) : \Ds \vu_0  - \frac{1}{c_v} \vt_0 \Div \vu_0 \right)\Big|_{\Gamma_D} = 0,  
		\label{L12} \\ 
		&\Grad \left( - \vu_0 \cdot \Grad \vt_0 - \frac{1}{c_v \vr_0} \Div (\kappa \Grad \vt_0) + \frac{1}{c_v \vr_0} \mathbb{S} (\Ds \vu_0) : \Ds \vu_0  - \frac{1}{c_v} \vt_0 \Div \vu_0 \right) \cdot \vc{n} \Big|_{\Gamma_N} = 0 
		\label{L12b}
	\end{align}
hold. Finally, suppose either $\Gamma_D \ne \emptyset$ or $q_B = 0$.
	
	Then there exists $T > 0$ such that the NSF system \eqref{p1}--\eqref{p6}, 
	with the initial conditions \eqref{p7}, and the boundary conditions \eqref{p8}, \eqref{p8a} 
	admits a unique solution $(\vr, \vt, \vu)$ in the class \eqref{L2}--\eqref{L1}. Moreover, the density $\vr$ and the temperature $\vt$ are strictly positive, 
	specifically \eqref{p4a}, \eqref{p5a} hold for any $\tau \in [0,T]$.

\end{Proposition}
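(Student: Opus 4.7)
The plan is to follow the energy method of Matsumura--Nishida and Valli: after lifting the inhomogeneous boundary data into the interior, one sets up a Picard-type iteration in $\mathcal{S}_V[0,T_0]$ and closes the argument by a contraction-mapping theorem for $T_0$ small, with positivity of $\vr$ and $\vt$ inherited from the minimum principles \eqref{p4a}--\eqref{p5a} already recorded in Section \ref{NM}.

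\emph{Lifting and iteration scheme.} Since $\vuB \in W^{7/2,2}(\partial\Omega)$ and $\partial\Omega$ is of class $C^4$, pick an extension $\vu^\star \in W^{4,2}(\Omega;R^3)$ with prescribed trace on $\partial\Omega$; using the decomposition \eqref{deco}, the alternative $\Gamma_D\ne\emptyset$ or $q_B=0$, and standard trace theory for mixed boundary value problems on $C^4$ domains, construct $\vt^\star \in W^{4,2}(\Omega)$ with $\vt^\star|_{\Gamma_D}=\vtB$ and $\Grad\vt^\star\cdot\vc{n}|_{\Gamma_N}=q_B$. Seek $\vu = \vu^\star + \vv$, $\vt = \vt^\star + \eta$, where the perturbations satisfy homogeneous boundary conditions. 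Given an iterate $(\bar\vr,\bar\vt,\bar\vu)$ in a ball $B_R\subset\mathcal{S}_V[0,T_0]$ agreeing with the data at $t=0$, produce the next iterate by: solving the linear transport equation $\partial_t\vr+\bar\vu\cdot\Grad\vr=-\vr\,\Div\bar\vu$, $\vr(0,\cdot)=\vr_0$, by the method of characteristics (strict positivity $\vr\geq\underline{\vr}\exp(-\int_0^t\|\Div\bar\vu\|_{L^\infty})$ is preserved); solving the linear parabolic Lam\'e-type system for $\vv$ with coefficient $\bar\vr$ and source involving $\bar\vr,\bar\vt,\vu^\star,G$; and solving the linear parabolic scalar equation for $\eta$ with coefficient $\bar\vr$ and source $\mathbb{S}(\Ds\bar\vu):\Ds\bar\vu-\bar\vr\bar\vt\,\Div\bar\vu$.

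\emph{A priori estimates and closure.} For each linear parabolic subproblem, the Ladyzhenskaya--Solonnikov--Ural'ceva theory in the $L^2$-Sobolev scale, combined with elliptic regularity for mixed Dirichlet--Neumann problems on a $C^4$ domain, yields the $L^2(0,T_0;W^{4,2})$ spatial bounds and the $L^2(0,T_0;L^2)$ bounds on $\partial^2_{tt}(\vt,\vu)$. The bound on $\vr \in C([0,T_0];W^{3,2})$ and on $\partial_t\vr \in C([0,T_0];W^{2,2})$ follows directly from the transport structure and $\bar\vu \in L^1(0,T_0;W^{3,2})$. The initial traces of $\partial_t\vu,\partial_t\vt,\partial^2_{tt}\vu,\partial^2_{tt}\vt$ are recovered algebraically by evaluating the equations at $t=0$; the compatibility conditions \eqref{L9}--\eqref{L12b} are exactly what is required for those traces to lie in the correct subspaces dictated by the homogeneous boundary conditions on the perturbations, so that the parabolic estimates apply. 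Choosing $R$ large relative to the norms of the data and $T_0$ small, the map $(\bar\vr,\bar\vt,\bar\vu)\mapsto(\vr,\vt,\vu)$ sends $B_R$ into itself and is a strict contraction in a \emph{weaker} norm of $\mathcal{S}_V[0,T_0]$ (top level of derivatives replaced by one less, to evade loss of derivatives); Banach's theorem produces the unique fixed point.

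\emph{Main obstacle.} The delicate step is the top-order $L^2$-estimate for $\partial^2_{tt}(\vt,\vu)$. Differentiating the momentum and energy equations twice in $t$, the quadratic source $\mathbb{S}(\Ds\vu):\Ds\vu$ generates terms involving up to three spatial derivatives of $\vu$ multiplied by lower-order derivatives; these are tamed using the three-dimensional Sobolev embeddings $W^{2,2}(\Omega)\hookrightarrow L^6(\Omega)$ and $W^{3,2}(\Omega)\hookrightarrow L^\infty(\Omega)$, closing a Gronwall loop on a short time interval. The second subtle point is matching the formal initial traces of the time-differentiated equations with the homogenized boundary conditions on $\partial\Omega$, $\Gamma_D$, and $\Gamma_N$: this is precisely what the multi-level compatibility chain \eqref{L11}--\eqref{L12b} encodes, making these identities indispensable rather than cosmetic.
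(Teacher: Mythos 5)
The paper does not actually prove Proposition \ref{PL1}: it is quoted directly from Valli \cite[Theorem A and Remark 3.3]{Vall1}, whose proof is exactly the linearization--iteration--energy-estimate scheme (boundary lifting, transport step for $\vr$, linear parabolic steps for $\vu$ and $\vt$, contraction in a lower-order norm, with the compatibility conditions \eqref{L9}--\eqref{L12b} ensuring the admissibility of the initial traces of the time-differentiated equations) that you sketch. Your outline is therefore consistent with the cited source and contains no gap at this level of detail.
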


Given the data $(\vr_0, \vt_0, \vu_0; \vt_B, \vu_B, q_B) \in \mathcal{D}_{V,I} \times \mathcal{D}_{V,B}$, we define 	
\begin{align}
T_{{\rm max},V} &= T_{{\rm max},V} (\vr_0, \vt_0, \vu_0; \vt_B, \vu_B, q_B) \br &= 
\sup \left\{ T \geq 0 \ \Big|\ \mbox{the solution of the NSF system exists in}\ \mathcal{S}_V[0,T] \right\}.  
\nonumber
\end{align}	
By virtue of Proposition \ref{PL1}, $T_{{\rm max},V} > 0$ for any data 	$(\vr_0, \vt_0, \vu_0; \vt_B, \vu_B, q_B) \in \mathcal{D}_{V,I} \times \mathcal{D}_{V,B}$
satisfying the compatibility conditions \eqref{L9}--\eqref{L12b}. In general, $T_{\rm max,V}$ may differ from $T_{\rm max}$ introduced in Theorem \ref{Tmain}. As we shall see {below}, however, they coincide for sufficiently regular data.

One may consider data belonging to higher order Sobolev spaces and satisfying higher order compatibility conditions to obtain higher regularity 
of the corresponding solutions, cf. Kagei and Kawashima \cite{KagKaw}.

\subsection{Mixed setting}

In their quest for a framework that would accommodate vacuum in solutions, Cho and Kim \cite{ChoKim1} introduced a class of solutions of ``mixed'' type, specifically,
\begin{align} 
	\vt &\in L^2(0,T; W^{2,q}(\Omega)) \cap C([0,T]; W^{2,2}(\Omega)), \br \partial_t \vt &\in L^2(0,T; W^{1,2}(\Omega)) \cap C([0,T]; L^2(\Omega)),
	\label{MS1}	\\
	\vu &\in L^2(0,T; W^{2,q}(\Omega; R^3)) \cap C([0,T]; W^{2,2}(\Omega;R^3)), \br \partial_t \vu &\in L^2(0,T; W^{1,2}(\Omega;R^3)) \cap C([0,T]; L^2(\Omega; R^3)),
	\label{MS2}
\end{align} 
where $3 < q \leq 6$. The density $\varrho$, again uniquely determined via integration along characteristics, belongs to the class 
\begin{equation} \label{MS3}
	\vr \in C([0,T]; W^{1,q}(\Omega)),\ \partial_t \vr \in C([0,T]; L^q(\Omega)).	
\end{equation}

Analogously to the preceding section, we introduce the solution space 
\[
\mathcal{S}_{ChK} = \left\{ (\vr, \vt, \vu) \ \Big|\ (\vr, \vt, \vu) \ \mbox{belongs to the class \eqref{MS1}--\eqref{MS3}} \right\},
\]
with the norm 
\begin{align}
	\| (\vr, \vt, \vu) \|_{\mathcal{S}_{ChK}[0,T]} &= \sup_{t \in [0,T]} \| \vr { (t, \cdot)} \|_{W^{1,q}(\Omega)} + 
	\sup_{t \in [0,T]}\| \partial_t \vr{ (t, \cdot)} \|_{L^q(\Omega)}\br &+ \sup_{t \in [0,T]} \| (\vt, \vu){ (t, \cdot)} \|_{W^{2,2}(\Omega; R^4)} + 
	\| (\vt, \vu) \|_{L^2(0,T; W^{2,q}(\Omega; R^4)} \br 
	&+\sup_{t \in { [0,T]}} \| (\partial_t \vt, \partial_t \vu){ (t, \cdot)} \|_{L^2(\Omega; R^4))} + 
	\| (\partial_t \vt, \partial_t \vu) \|_{L^2(0,T; W^{1,2}(\Omega; R^4)}.	
	\label{MS4}
\end{align}

The associated data belong to the trace space
\begin{align}
	\vr_0 &\in W^{1,q}(\Omega), \ 
	\vt_0\in W^{2,2}(\Omega),\ 
	\vu_0 \in W^{2,2} (\Omega;R^3), \label{MS5} \\
	\vu_B &\in W^{2 - \frac{1}{q},q}(\partial \Omega), \
	\vt_B \in W^{2 - \frac{1}{q},q}(\Gamma_D)) , \label{MS6} \\ 
	q_B &\in W^{1 - \frac{1}{q},q}(\Gamma_N)) .
	\label{MS7}
\end{align}

{We introduce the data spaces in the Cho-Kim class
$$\mathcal{D}_{ChK, I}= \left\{ (\vr_0, \vt_0, \vu_0) \ \Big|\
	(\vr_0, \vt_0, \vu_0) \ \mbox{belong to the class \eqref{MS5}} \right\},$$
with the norm
\begin{equation} \label{MS8}
	\| (\vr_0, \vt_0, \vu_0) \|_{\mathcal{D}_{ChK,I}} = \| \vr_0 \|_{{ W^{1,q}}(\Omega)} + \| \vu_0 \|_{W^{2,2}(\Omega; R^3)} + 
	\| \vt_0 \|_{W^{2,2}(\Omega)},
\end{equation}
and
$$\mathcal{D}_{ChK,B} = \left\{ (\vt_B, \vu_B, {q}_B) \ \Big|\
(\vt_B, \vu_B, {q_B}) \ \mbox{belong to the class \eqref{MS6}, \eqref{MS7}} \right\}, 
$$
with the norm
}
\begin{equation} 
	\left\| (\vuB, \vtB, q_B ) \right\|_{\mathcal{D}_{ChK, B}} =  \| \vtB \|_{W^{2 - \frac{1}{q},q}(\Gamma_D)} 	
	+ { \| \vuB \|_{W^{2 - \frac{1}{q},q}(\Omega; R^3)} }+  \| q_B \|_{W^{1 - \frac{1}{q},q}(\Gamma_N)}. 
	\label{MS9}
\end{equation}

As a matter of fact, Cho and Kim \cite[Theorem 2.1]{ChoKim1} proved a local existence of solutions in the above class for the 
purely Dirichlet problem $\Gamma_N = \emptyset$, $\vu_B = 0$, and $\vt_B = 0$, where the last condition is rather unphysical. As a by product of our analysis, we extend this result to the inhomogeneous and physically relevant boundary conditions, see Theorem \ref{TEL2} below.

\subsection{$L^p-L^q$ setting}

Finally, we introduce the notation relevant to the $L^p-L^q$ framework used in Theorem \ref{Tmain}. First observe that the 
velocity field $\vu$ belongs to the space 
\[
L^p(0,T; W^{2,q}(\Omega; R^3)),\ W^{2,q} \hookrightarrow C^{1 + \beta} \ \mbox{as}\ q > 3. 
\]
In particular, the density may be recovered by the method of characteristics: 
\begin{equation} \label{L14b}
\vr \in C([0,T]; W^{1,q}(\Omega)),\ \partial_t \vr \in  L^p(0,T; L^q(\Omega))
\end{equation}	
as long as $\vr_0 \in W^{1,q}(\Omega)$, 
cf. Danchin \cite{Danch2010}.

Similarly to the preceding sections, we introduce the solution space 	
\[
\mathcal{S}_{p,q}[0,T] = \left\{ (\vr, \vt, \vu) \Big| 
\ (\vr, \vt, \vu) \ \mbox{belong to the class}\ \eqref{PP1}-\eqref{PP3} \right\}.
\]	
We define a norm on $S_{p,q}[0,T]$ as
\begin{align} 
	\| (\vr, \vt, \vu) \|_{\mathcal{S}_{p,q}[0,T]} &= \sup_{t \in [0,T]} \| \vr{ (t, \cdot)} \|_{W^{1,q}(\Omega)} + \| (\vt, \vu) \|_{L^p(0,T; W^{2,q}(\Omega; R^4))}
	+ \| \partial_t (\vr, \vt, \vu)\|_{L^p(0,T; L^q (\Omega; R^5))} \br 
	&+ \| (\vt, \vu)(0,\cdot) \|_{B^{2(1-\frac{1}{p})}_{q,p}(\Omega; R^4)}.
	\label{L3g}
\end{align}
Note carefully that the norm includes the norm of the initial data in the interpolation space. This fact will be exploited later in Section \ref{exipq}.	
	
The associated space of initial data reads
\[
\mathcal{D}_{p,q,I} = \left\{ (\vr_0, \vt_0, \vu_0) \Big|\ 	
\ (\vr_0, \vt_0, \vu_0) \ \mbox{belong to the regularity class \eqref{PP4}-\eqref{PP6}} \right\}, 
\]
with the norm 
\begin{equation} \label{L17}
\| (\vr_0, \vt_0, \vu_0) \|_{\mathcal{D}_{p,q,I}} = 
\| \vr_0 \|_{W^{1,q}(\Omega)} + \| { (\vt_0, \vu_0)} \|_{B^{2(1 - \frac{1}{p})}_{q,p}(\Omega; R^4)}.
	\end{equation}
The boundary data belong to the space
\[
\mathcal{D}_{p,q,B} = \left\{ (\vt_B, \vu_B, q_B) \Big|\ 	
\ ({\vt_B}, \vu_B, q_B) \ \mbox{belong to the regularity class \eqref{PP7}-\eqref{PP9}} \right\}, 
\]
with the norm
\[
\| (\vt_B, \vu_B, q_B) \|_{{\mathcal{D}_{p,q,B}}}= \| \vt_B \|_{W^{2 - \frac{1}{q},q}(\Gamma_D; R)} { +}
\| \vu_B \|_{W^{2 - \frac{1}{q},q}(\partial \Omega; R^3)} { +} \| {q}_B \|_{W^{1 - \frac{1}{q},q}(\Gamma_N)}.
\]
	
Recall that
\begin{equation}\label{L18}
\mathcal{S}_{p,q}[0,T] \hookrightarrow 
C([0,T]; \mathcal{D}_{p,q,I}).  
\end{equation}

Kotschote \cite[Theorem 2.1]{KOT6} showed the local existence for $p = q > 3$, with certain restrictions imposed {on} the boundary heat flux $q_B$. 
Theorem \ref{Tmain} extends his result to the general case $p \ne q$ and a larger class of heat fluxes imposed on $\Gamma_N$. In particular, we 
propose an alternative method of proof in the case $p = q$.

\section{Conditional regularity estimates}
\label{CR}

Having collected the necessary preliminary material, {we} deduce several conditional regularity estimates on the strong solution belonging to the 
regularity class used by Valli.  

First, we need a suitable extension of boundary data. Specifically, we set 
\begin{equation} \label{BD1}
	\Div \mathbb{S}(\Grad \widetilde{\vu}_B) = 0 \ \mbox{in}\ \Omega,\ 
	\widetilde{\vu}_B|_{\partial \Omega} = \vuB,	
\end{equation}	
and
\begin{align} 
	\Del \widetilde{\vt}_B = 0 \ \mbox{in}\ \Omega,\ 
	\widetilde{\vt}_B|_{\Gamma_D} &= \vtB,\ 
	\Grad \widetilde{\vt}_B \cdot \vc{n} |_{\Gamma_N} = q_B, \ \mbox{if}\ \Gamma_D \ne \emptyset, \br
	\widetilde{\vt}_B = \underline{\vt} &= \inf_{\Omega} \vt_0 \ \mbox{if}\ \Gamma_D = \emptyset.
	\label{BD2}
\end{align}	

Note that, by virtue of hypothesis \eqref{PP7}, ${\widetilde{\vtB}} > 0$ in $\Ov{\Omega}$. In what follows, we shall identify the boundary data with their extension, 
$\vtB \approx \widetilde{\vt}_B$, $\vuB \approx \widetilde{\vu}_B$.

\subsection{Conditional regularity in the mixed and Hilbertian setting}

The following result represents a variant of the conditional regularity estimates proved in \cite{BaFeMi}. 
Here and hereafter, the symbol $\Lambda$ denotes a generic 
positive and non--decreasing function, $\Lambda: [0, \infty) \to (0, \infty)$.

\begin{Proposition}[{\bf Conditional regularity - mixed and Hilbertian framework}] 
	\label{PCR1}
	Let the hypotheses of Proposition \ref{PL1} be satisfied. Let  $(\vr, \vt, \vu)$ be the unique strong solution of NSF system defined on some time interval $(0,T)$.
	
	Then there exists a non--decreasing (positive) function 
	$\Lambda : [0, \infty) \to (0, \infty)$ such that the following estimates hold:
\begin{itemize}	
\item	
		\begin{align} 
		&\left\| (\vr, \vt, \vu) \right\|_{\mathcal{S}_{ChK} [0,T]} + \sup_{t \in [0,T]} \left\| \left(\frac{1}{\vr}, \frac{1}{\vt} \right) { (t, \cdot)} \right\|_{C(\Ov{\Omega}; {R^2})} 
		\br &\leq 
		\Lambda \left(T + \left\| (\vr_0, \vt_0, \vu_0) \right\|_{\mathcal{D}_{ChK,I}} + \left\| (\vuB, \vtB, q_B) \right\|_{\mathcal{D}_{ChK,B}}  
		+ \left\| \left(\frac{1}{\vr_0}, \frac{1}{\vt_0} \right) \right\|_{C(\Ov{\Omega}; {R^2})} + \| G \|_{W^{2,2}(\Omega)}\right. \br  &\quad + \left.  \left\| (\vr, \vt, \vu ) \right\|_{C([0,T]\times \Ov{\Omega}; R^5)} 	\right),\ 3 < q \leq 6;
		\label{c5a}
	\end{align}
	
\item	 
	\begin{align} 
		 &\left\| (\vr, \vt, \vu) \right\|_{\mathcal{S}_V[0,T]} + \sup_{t \in [0,T]} \left\| \left(\frac{1}{\vr}, \frac{1}{\vt} \right) {(t, \cdot)} \right\|_{C(\Ov{\Omega}; {R^2})} 
		\br &\leq 
		\Lambda \left(T + \left\| (\vr_0, \vt_0, \vu_0) \right\|_{\mathcal{D}_{V,I}} + \left\| (\vuB, \vtB, q_B) \right\|_{\mathcal{D}_{V,B}}  
		+ \left\| \left(\frac{1}{\vr_0}, \frac{1}{\vt_0} \right) \right\|_{C(\Ov{\Omega}; {R^2})} + \| G \|_{W^{3,2}(\Omega)} \right. \br  &+ \left.  \left\| (\vr, \vt, \vu ) \right\|_{C([0,T] \times \Ov{\Omega}; R^5)} 	\right).
		\label{c5}
	\end{align}
\end{itemize}		
\end{Proposition}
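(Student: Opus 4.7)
The plan is to derive the estimates hierarchically, using the reformulated system \eqref{p1a}--\eqref{p3a}. The amplitude bound $\|(\vr,\vt,\vu)\|_{C([0,T]\times \Ov\Omega)}$ together with the minimum principles from Section \ref{NM} (which yield lower bounds on $\vr$ and $\vt$ in terms of $\int_0^T \|\Div\vu\|_{L^\infty}$) is treated as the key \emph{input} that renders all nonlinear coefficients (like $1/\vr$, $1/\vt$, and the quadratic dissipation) harmless. I would first homogenize by subtracting the extensions $\widetilde{\vu}_B$, $\widetilde{\vt}_B$ defined in \eqref{BD1}--\eqref{BD2}, so that the perturbations $\vu-\widetilde{\vu}_B$ and $\vt-\widetilde{\vt}_B$ satisfy homogeneous boundary conditions, with forcing controlled by the boundary data norms. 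The goal is then to control the scales in \eqref{MS1}--\eqref{MS3} and \eqref{L2}--\eqref{L1} on any interval where the amplitude remains bounded.

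The workhorse estimate is a standard energy identity at the level of $\partial_t(\vu,\vt)$: differentiate the momentum and internal energy equations in time, test against $\partial_t \vu$ and $\partial_t\vt$ respectively, and integrate by parts. The amplitude bound converts all products of lower-order quantities into constants depending on $\Lambda(\|(\vr,\vt,\vu)\|_\infty)$, while the principal terms produce $L^\infty_t L^2_x$ control on $(\partial_t\vu,\partial_t\vt)$ and $L^2_t H^1_x$ control on the same. The value of $\partial_t(\vu,\vt)$ at $t=0$ is read off from the equations using the compatibility conditions \eqref{L9}--\eqref{L12b}. Next, view \eqref{p2a} as a Lam\'e system and \eqref{p3a} as a second-order elliptic equation at fixed time, with the right-hand sides now containing $\partial_t\vu$, $\partial_t\vt$ (already controlled) plus lower-order terms bounded by the amplitude; elliptic regularity then upgrades $(\vu,\vt)$ to $L^\infty_t H^2_x\cap L^2_t W^{2,q}_x$ for $3<q\le 6$. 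For the density one uses the transport character of \eqref{p1a}, propagating $\nabla\vr$ in $L^q(\Omega)$ by Gronwall in terms of $\int_0^T\|\nabla\vu\|_{L^\infty}$, which is finite via the embedding $W^{2,q}\hookrightarrow W^{1,\infty}$ for $q>3$.

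This yields \eqref{c5a}. To reach the Valli class \eqref{c5}, I iterate: differentiate once more in time, test again by the second time derivative, and note that the amplitude bound plus the previously obtained $\mathcal{S}_{ChK}$ bound now serve as the control. This produces $L^2_t L^2_x$ estimates for $\partial_{tt}^2(\vu,\vt)$, and a further elliptic step at each time slice gives the $L^2_t W^{4,2}$ regularity of \eqref{L2}--\eqref{L3}. The transport equation then propagates $\vr$ in $C_t W^{3,2}$ using the improved velocity regularity. The hypothesis $\vuB\cdot\vc{n}=0$ is critical throughout, both in the integration by parts of the convective terms and in ensuring the minimum principle \eqref{p5a} for $\vt$ in the presence of the nonnegative Neumann flux $q_B$.

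The main obstacle is the dissipation term $\mathbb{S}(\Ds\vu):\Ds\vu$ in the temperature equation, which is quadratic in $\nabla\vu$ and is therefore borderline: the $L^\infty$ bound on $\vu$ alone does not control it, so one must carefully order the bootstrap so that the velocity enjoys $L^2_t H^2_x$ regularity (hence $\nabla\vu\in L^2_t L^6_x$ in 3D) \emph{before} estimating $\vt$, then close the loop using the positive lower bound on $\vr$. A secondary technical issue is extending the Neumann condition on $\Gamma_N$ consistently to the interior when $\Gamma_D=\emptyset$, which forces the alternative $q_B=0$ in \eqref{BD2} to keep $\widetilde{\vt}_B$ well-defined and strictly positive; this is where the hypothesis noted in Remark \ref{PR3} enters.
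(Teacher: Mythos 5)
Your overall architecture (energy bounds, a time--derivative level estimate, elliptic regularity at fixed time, transport of $\nabla_x\vr$, then iteration to reach the Valli class) is in the same spirit as the paper, and you correctly identify the role of the extensions \eqref{BD1}--\eqref{BD2}, of $\vuB\cdot\vc{n}=0$, and of the quadratic dissipation as a borderline term. However, there is a genuine gap at the very first ``workhorse'' step. You claim that after differentiating \eqref{p2a}, \eqref{p3a} in time and testing with $\partial_t\vu$, $\partial_t\vt$, ``the amplitude bound converts all products of lower-order quantities into constants.'' This is false: the time--differentiated momentum equation contains $\partial_t\vr\,\partial_t\vu$, $\partial_t(\vr\vu\cdot\Grad\vu)$ and $\Grad\partial_t p$, and since $\partial_t\vr=-\Div(\vr\vu)$ and $\Grad p=\vt\Grad\vr+\vr\Grad\vt$, these terms involve $\Grad\vr$, which is \emph{not} controlled by $\|(\vr,\vt,\vu)\|_{C([0,T]\times\Ov{\Omega})}$. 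The same problem reappears in your elliptic step: the Lam\'e right--hand side contains $\vt\Grad\log\vr$, so $\|\vu\|_{W^{2,q}}$ is only controlled once $\Grad\vr\in L^q$ is known; but your Gronwall propagation of $\Grad\vr$ requires $\int_0^T\|\Grad\vu\|_{L^\infty}\dt$, i.e. precisely the $L^2_tW^{2,q}$ velocity bound you are trying to produce. As sketched, the argument is circular, and you give no mechanism (simultaneous Gronwall with a specific structure, continuity argument, or otherwise) to break the loop.

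The paper breaks this loop differently, and this is the key idea your proposal is missing: instead of plain time derivatives, it follows Hoff and Fang--Zi--Zhang and works with the material derivative $D_t\vu$, testing \eqref{prr1} and its material derivative against $D_t(\vu-\vuB)$ (admissible because $\vuB$ is tangential and time--independent, so $D_t(\vu-\vuB)$ vanishes on $\partial\Omega$). In these estimates the pressure terms are integrated by parts so that only $p=\vr\vt$ itself -- controlled by the amplitude -- appears, never $\Grad\vr$; combined with the Sun--Wang--Zhang effective--flux decomposition and the heat equation tested with $\partial_t\vt$, this yields $\sqrt{\vr}D_t\vu\in L^\infty_tL^2$, $\Grad\vu\in L^\infty_tL^4$ and $\vr|D_t\vt|^2\in L^1_{t,x}$ \emph{before} any density--gradient information is needed. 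Only afterwards are $\Grad\vr\in L^\infty_tL^q$, the $W^{2,q}$ velocity bound, and the time--differentiated heat--equation estimate (where $\partial_t\vr\in L^q$ is by then available) carried out, and the $L^4$ bound on $\Grad\vu$ is what makes the dissipation term harmless in the elliptic step for $\vt$. To repair your proof you would either have to import this material--derivative/effective--flux structure, or exhibit a genuinely closed simultaneous Gronwall inequality coupling $\|\partial_t(\vu,\vt)\|_{L^2}$, $\|\Grad\vr\|_{L^q}$ and $\|\vu\|_{W^{2,q}}$ in terms of the amplitude alone; the latter is not established by your sketch.
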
	

\begin{Remark} \label{RCR1}

The estimate \eqref{c5} was basically shown in \cite{BaFeMi}, see also \cite{AbBaCh}. The main novelty is the estimate \eqref{c5a} in the class of lower regularity that will be exploited later for proving local existence in the mixed setting.
	
\end{Remark}

\begin{Remark} \label{RRC1}
Roughly speaking, the conclusion of Proposition \ref{PCR1} can be seen as {\it a priori} estimates for the 
NSF system in terms of the data and the $L^\infty-$norm of the solution.	
	\end{Remark}

The rest of the section is devoted to the proof of Proposition \ref{PCR1}. We proceed in several steps.

 \subsubsection{Energy estimates}
 
 Any bounded solution $(\vr, \vt, \vu)$ of the NSF system satisfies the energy bounds  
 \begin{align} 
 	\int_0^T &\left( \| \vu \|^2_{W^{1,2}(\Omega; R^3)} + 
 	\| \vt \|_{W^{1,2}(\Omega)}^2 \right) \dt \br
 	&\leq \Lambda \Big( T + \|(\vr_0, \vt_0, \vu_0) \|_{\mathcal{D}_{ChK,I}} + \| (\vuB, \vtB, q_B) \|_{\mathcal{D}_{ChK, B}}
 	 + \| (\vr, \vt, \vu) \|_{L^\infty((0,T) \times \Omega)}	 \Big).
 	\label{pr4}
 \end{align}	

Indeed we multiply the momentum equation \eqref{p2} on $\vu - \vuB$ and integrate the resulting expression by parts obtaining
 \begin{align} 
 	\frac{\D}{\dt} &\frac{1}{2} \intO{ \vr |\vu - \vuB|^2 } + 
 	\intO{ \mathbb{S}(\Ds \vu - \Ds \vuB) : (\Ds \vu - \Ds \vuB) } \br
 	&= \intO{ {\vr \vu \cdot \Grad \vuB \cdot (\vu-\vuB)} } + \intO{ p(\vr, \vt) \Div (\vu - \vuB) } + 
 	\intO{ \vr \Grad G \cdot (\vu - \vu_B) },
 	\label{pr4b} 
 \end{align}	
 where, by virtue of Korn--Poincar\' e inequality, 
 \begin{equation} \label{pr4a}
 	\intO{ \mathbb{S}(\Ds \vu - \Ds \vuB) : (\Ds \vu - \Ds \vuB) } \ageq 
 	\| \vu - \vuB \|^2_{W^{1,2}_0(\Omega; R^3)}.	
 \end{equation}
 Thus the bound for $\vu$ claimed in \eqref{pr4} results from integrating 
 \eqref{pr4b} in time. 
 
 Similarly, we multiply the heat equation \eqref{p3} on $(\vt - \vtB)$ to deduce 
 \begin{align}
 	c_v \frac{\D }{\dt} &\intO{ \vr |\vt - \vtB|^2 } + \kappa \intO{ |\Grad \vt - 
 		\Grad \vtB |^2 } \br
 	&=c_v {\int_{\Omega} \vr (\vt-\vtB) \ \vu \cdot \Grad \vtB \dx } + \intO{ (\vt - \vtB) \mathbb{S}(\Ds \vu) : \Ds \vu  } -  \intO{(\vt - \vtB) p(\vr, \vt) \Div \vu  }.
 	\label{pr4c}
 \end{align} 	
 Combining \eqref{pr4c} with the previous estimate on $\Grad \vu$ we complete the proof of \eqref{pr4}.
 
 \subsubsection{Estimates on the material derivative}
 
 Similarly to \cite[Sections 4-6]{BaFeMi}, we adapt the arguments of Fang, Zi, and Zhang \cite[Section 3]{FaZiZh} to the present choice of boundary conditions. 
 
 We start by introducing the material derivative 
 \[
 D_t g = \partial_t g + \vu \cdot \Grad g.
 \]
 The momentum equation \eqref{p2} takes the form
 \begin{equation} \label{prr1}
 	\vr D_t \vu + \Grad p = \Div \mathbb{S}(\Ds \vu) + \vr \Grad G.
 \end{equation}
 Following \cite[Section 4.1]{BaFeMi} we multiply \eqref{prr1} on $D_t (\vu - \vuB)$ and perform by parts integration. This step is exactly the same as in 
 \cite[Section 4.1]{BaFeMi} yielding finally the inequality 
 \begin{align}
 	&\frac{1}{2} \frac{\D }{\dt} \intO{ \mathbb{S} (\Ds \vu) : \Ds \vu } - \frac{\D }{\dt} \intO{ p \Div \vu } + 
 	\frac{1}{2} \intO{ \vr |D_t \vu|^2 } \br 
 	&\leq \Lambda \left(T + \| (\vr_0, \vt_0, \vu_0) \|_{\mathcal{D}_{ChK,I}} + \| (\vuB, \vtB, q_B) \|_{\mathcal{D}_{ChK,B}} + 
 	\left\| \left(\frac{1}{\vr_0}, \frac{1}{\vt_0}\right) \right\|_{C(\Ov{\Omega}; {R^2})} + \| (\vr, \vt, \vu) \|_{L^\infty((0,T) \times \Omega; R^5)}  \right) \times \br 
 	&\times \left(1  + \intO{ \vr |D_t \vt| |\Grad \vu|    } + \intO{ |\Grad \vu |^3 } + \| G \|_{W^{2,2}(\Omega)} \right).
 	\label{g2a}
 \end{align}

 Next, following the original idea of Hoff \cite{HOF1} (cf. also \cite[Section 3, Lemma 3.3]{FaZiZh}), we compute the material derivative of the momentum equation 
 \eqref{p2}:
 \begin{align} 
 	\vr &D^2_t \vu + \Grad \partial_t p + \Div (\Grad p \otimes \vu)	 \br &= \mu \Big( \Del \partial_t \vu + \Div (\Del \vu \otimes \vu) \Big) + \left( \lambda + \frac{\mu}{3} \right) \Big(
 	\Grad \Div \partial_t \vu + \Div \left( (\Grad \Div \vu) \otimes \vu \right) \Big) + \vr \vu \cdot \Grad^2 G.
 	\label{g3}
 \end{align}
 Exactly as in \cite[Section 4.2]{BaFeMi}, we may take the scalar product of 
 \eqref{g3} with $D_t(\vu - \vuB)$, and, after a lengthy computation, obtaining 
 \begin{align}
 	&\frac{1}{2} \frac{\D}{\dt} \intO{ \vr |D_t (\vu - \vuB) |^2 }	+ \mu \intO{ |\Grad D_t (\vu - \vuB) |^2 } + \left( \lambda +\frac{\mu}{3}\right) \intO{ |\Div D_t ( \vu - \vuB )|^2 } \br 
 	&\leq \Lambda \left(T + \| (\vr_0, \vt_0, \vu_0) \|_{\mathcal{D}_{ChK,I}} + \| (\vuB, \vtB, q_B) \|_{\mathcal{D}_{ChK,B}} + 
 	\left\| \left(\frac{1}{\vr_0}, \frac{1}{\vt_0}\right) \right\|_{C(\Ov{\Omega})} + \| (\vr, \vt, \vu) \|_{L^\infty(0,T) \times \Omega}  \right) \times \br 
 	&\times \left( 1 + \intO{ \vr |D_t \vt |^2 } + \intO{ |\Grad \vu |^4 }
 	+ \intO{ \vr |D_t \vu |^2 } + \| G \|_{W^{2,2}(\Omega)} \right).
 	\label{g4}
 \end{align}

 We close the estimates following the technique of velocity decomposition 
 proposed in the seminal paper by Sun, Wang, and Zhang \cite{SuWaZh1}. To this end, we first multiply the heat equation \eqref{p3} on $\partial_t \vt$ and integrate over $\Omega$: 
 \begin{align} 
 	c_v \intO{ \vr |D_t \vt|^2 } & + \frac{\kappa}{2} \frac{\D }{\dt} \intO{ |\Grad \vt |^2 } - {\kappa} \frac{\D}{\dt} \int_{\Gamma_N} \vt q_B \ \D \sigma \br &= 
 	c_v \intO{ \vr D_t \vt \ \vu \cdot \Grad \vt } - \intO{ \vr \vt \ \Div \vu \ D_t \vt} + \intO{ \vr \vt \  \Div \vu\  \vu \cdot \Grad \vt } \br 
 	&+ \frac{\D }{\dt} \intO{ \vt \  \mathbb{S}(\Ds \vu) : \Grad \vu } \br	&- 
 	\mu \intO{ \vt \left( \Grad \vu + \Grad^t \vu - \frac{2}{3} \Div \vu \mathbb{I} \right): \left( \Grad \partial_t \vu + \Grad^t \partial_t \vu - \frac{2}{3} \Div \partial_t \vu \mathbb{I} \right) } \br
 	&- 2 {\lambda }\intO{ \vt \  \Div \vu \  \Div \partial_t \vu }. 	
 	\label{g12}
 \end{align}
 Here, the boundary integral corresponding to the boundary condition \eqref{p8a}
 reads 
 \begin{align}
 	\intO{ \Del \vt \partial_t \vt } &= 
 	\int_{\partial \Omega} \partial_t \vt \Grad \vt \cdot \vc{n}\ \D \sigma - 
 	\frac{1}{2} \frac{\D}{\dt} \intO{ |\Grad \vt |^2 }  \br &= 
 	\frac{\D }{\dt} \int_{\Gamma_N} \vt q_B \ \D \sigma   - 
 	\frac{1}{2} \frac{\D}{\dt} \intO{ |\Grad \vt |^2 } .
 	\nonumber
 \end{align}
 
 It turns out that we may close the estimates exactly as in 
 \cite[Sections 4-6]{BaFeMi}. 
 \begin{align} 
 	\sup_{t \in [0,T]} &\Big( \| \vu (t, \cdot) \|_{W^{1,2}(\Omega; R^3)} + \|\sqrt{\vr}  D_t \vu (t, \cdot) \|_{L^{2}(\Omega; R^3)} + \| \vt (t, \cdot) \|_{W^{1,2}(\Omega)} + \| \Grad \vu (t, \cdot) \|_{L^4(\Omega; R^{3\times 3})} \Big) \br 
 	&+ \int_0^T \intO{ |\Grad D_t \vu |^2 } \dt + 	\int_0^T \intO{ \vr |D_t \vt |^2 } \dt +	\int_0^T \| \vt \|^2_{W^{2,2}(\Omega)} \dt \br
  &\leq \Lambda \left( T + 	
 \| (\vr_0, \vt_0, \vu_0) \|_{\mathcal{D}_{ChK,I}} + \| (\vuB, \vtB, q_B) \|_{\mathcal{D}_{ChK,B}} + 
 \left\| \left(\frac{1}{\vr_0}, \frac{1}{\vt_0}\right) \right\|_{C(\Ov{\Omega})} \right. \br
 &+ \left. \left\| (\vr, \vt, \vu ) \right\|_{L^\infty((0,T) \times \Ov{\Omega}; R^5)} + \| G \|_{W^{2,2}(\Omega)} \right).
\nonumber
 \end{align}
 
{Combining the previous estimates as in \cite[Sections 5,6]{BaFeMi}, we additionally deduce that
\begin{equation*}
	\begin{aligned}
		&\sup_{t \in [0,T]} \left( \| \vr (t, \cdot) \|_{W^{1,q}(\Omega)} + \| \partial_t \vr (t,\cdot) \|_{L^q(\Omega)} \right) + \int_0^T \| \vu \|^2_{W^{2,q}(\Omega; R^3)} \dt \\
		&\leq \Lambda \left( T + 	
	\| (\vr_0, \vt_0, \vu_0) \|_{\mathcal{D}_{ChK,I}} + \| (\vuB, \vtB, q_B) \|_{\mathcal{D}_{ChK,B}}  + \left\| (\vr, \vt, \vu ) \right\|_{L^\infty((0,T) \times \Ov{\Omega}; R^5)} + \| G \|_{W^{2,2}(\Omega)} \right).
	\end{aligned}
\end{equation*}
Moreover, from the fact that 
\begin{equation*}
	\Div \mathbb{S}\big(\Ds (\vu- \vu_B)\big) = \vr D_t \vu + \vt \Grad \vr + \vr \Grad \vt - \vr \Grad G
\end{equation*}
and from the classical regularity theory for elliptic equations, we recover that 
\begin{equation*}
	\begin{aligned}
		& \sup_{t \in [0,T]} \left\| \vu(t, \cdot) \right\|_{W^{2,2}(\Omega; R^3)} \\
		& { \ \lesssim \sup_{t \in [0,T]} \left(\left\| \sqrt{\vr} D_t \vu (t, \cdot) \right\|_{L^2(\Omega; R^3)} + \| \Grad \vr (t, \cdot) \|_{L^2(\Omega; R^3)}  + \| \Grad \vt (t, \cdot)  \|_{L^2(\Omega; R^3)}+ \| \Grad G \|_{L^2(\Omega; R^3)}\right)  } \\
		&\leq \Lambda \left( T + 	
		\| (\vr_0, \vt_0, \vu_0) \|_{\mathcal{D}_{ChK,I}} + \| (\vuB, \vtB, q_B) \|_{\mathcal{D}_{ChK,B}} + 
		\left\| \left(\frac{1}{\vr_0}, \frac{1}{\vt_0}\right) \right\|_{C(\Ov{\Omega})} \right. \br
		&+ \left. \left\| (\vr, \vt, \vu ) \right\|_{L^\infty((0,T) \times \Ov{\Omega}; R^5)} + \| G \|_{W^{2,2}(\Omega)} \right).
	\end{aligned}
\end{equation*}
We can now use the minimum principle for the density \eqref{p4a} to deduce the desired estimates in terms of $\vr$ and $\vu$ appearing in \eqref{c5a}. To get the remaining estimates for $\vt$, we first have to take the time derivative of equation \eqref{p3}, multiply the resulting expression on $\partial_t \vt$ and integrate it over $\Omega$, obtaining
\begin{equation} \label{i1}
	\begin{split}
		c_v &\frac{\D }{\dt} \intO{ \vr |\partial_t \vt|^2 }  + \kappa \intO{ |\Grad \partial_t\vt |^2 } \\ 
		=& -c_v \intO{ \partial_t \vr \  |\partial_t \vt|^2 } -c_v \intO{ \partial_t \vr \, \partial_t \vt \,  \vu \cdot \Grad \vt}  - c_v \intO{ \vr \, \partial_t \vt \,  \partial_t \vu \cdot \Grad \vt} \\
		&+ 2 \intO{\partial_t \vt \, \mathbb{S}(\Ds \vu) : \Grad \partial_t\vu  } \\
		&- \intO{ \vt \, \partial_t \vr \, \partial_t \vt \, \Div \vu } - \intO{ \vr \, |\partial_t \vt|^2 \, \Div \vu } - \intO{ \vr \vt \, \partial_t \vt \Div \partial_t \vu}
	\end{split}
\end{equation}
By a Gr\"{o}nwall argument, using the previously achieved estimates, it is not difficult to deduce
\begin{equation*}
	\begin{aligned}
		\sup_{t \in [0,T]}  &\left\| \partial_t \vt(t, \cdot) \right\|_{L^{2}(\Omega)} + \int_0^T \| \partial_t \vt \|^2_{W^{1,2}(\Omega)} \dt \\
		&\leq \Lambda \left( T + 	
	\| (\vr_0, \vt_0, \vu_0) \|_{\mathcal{D}_{ChK,I}} + \| (\vuB, \vtB, q_B) \|_{\mathcal{D}_{ChK,B}} + 
	\left\| \left(\frac{1}{\vr_0}, \frac{1}{\vt_0}\right) \right\|_{C(\Ov{\Omega}; R^2)} \right. \br
		&+ \left. \left\| (\vr, \vt, \vu ) \right\|_{L^\infty((0,T) \times \Ov{\Omega}; R^5)} + \| G \|_{W^{2,2}(\Omega)} \right).
	\end{aligned}
\end{equation*}

{
Indeed, denoting with $I_k$, $k=1, \dots, 7$, the integrals appearing on the right-hand side of \eqref{i1}, we have 
\begin{align*}
	|I_1| &\lesssim \left\| \frac{1}{\vr} \right\|_{L^{\infty}} \|\partial_t \vr \|_{L^3} \| \sqrt{\vr} \partial_t \vt \|_{L^2} \|  \partial_t \vt \|_{L^6} \leq \varepsilon  \| \Grad \partial_t \vt \|_{L^2}^2 + C(\varepsilon) \left\| \frac{1}{\vr} \right\|_{L^{\infty}}^2 \|\partial_t \vr \|_{L^q}^2 \| \sqrt{\vr} \partial_t \vt \|_{L^2}^2 \\[0.2cm]
	|I_2| & \lesssim \|\partial_t \vr \|_{L^3} \|  \partial_t \vt \|_{L^6}  \| \Grad \vt \|_{L^{2}} \leq \varepsilon  \| \Grad \partial_t \vt \|_{L^2}^2 + C(\varepsilon) \ \|\partial_t \vr \|_{L^q}^2 \| \vt \|_{W^{1,2}}^2\\[0.2cm]
	|I_3| & \lesssim \| \partial_t \vt \|_{L^6} \| \partial_t \vu \|_{L^3} \| \Grad \vt \|_{L^2} \leq \varepsilon  \| \Grad \partial_t \vt \|_{L^2}^2  + C(\varepsilon)  \| \vt \|_{W^{1,2}}^2 \|  \Grad \partial_t \vu \|_{L^2}^2 \\[0.2cm]
	|I_4| & \lesssim \| \partial_t \vt \|_{L^6} \| \Grad \vu \|_{L^3}  \| \Grad \partial_t \vu\|_{L^2} \leq \varepsilon  \| \Grad \partial_t \vt \|_{L^2}^2  + C(\varepsilon) \| \vu \|_{W^{2,2}}^2 \| \Grad \partial_t \vu\|_{L^2}^2 \\[0.2cm]
	|I_5| & \lesssim \|\partial_t \vr \|_{L^3} \|  \partial_t \vt \|_{L^6}  \| \Grad \vu \|_{L^{2}} \leq \varepsilon  \| \Grad \partial_t \vt \|_{L^2}^2 + C(\varepsilon) \ \|\partial_t \vr \|_{L^q}^2 \| \vu \|_{W^{1,2}}^2\\[0.2cm]
	|I_6| &\lesssim \| \sqrt{\vr} \partial_t \vt \|_{L^2} \|  \partial_t \vt \|_{L^6} \| \Grad \vu \|_{L^3} \leq \varepsilon  \| \Grad \partial_t \vt \|_{L^2}^2  + C(\varepsilon) \| \vu \|_{W^{2,2}}^2 \| \sqrt{\vr} \partial_t \vt \|_{L^2}^2 \\[0.2cm]
	|I_7| & \lesssim \| \sqrt{\vr} \partial_t \vt \|_{L^2}  \| \Grad \partial_t \vu\|_{L^2} \lesssim   \| \Grad \partial_t \vu\|_{L^2}^2 + \| \sqrt{\vr} \partial_t \vt \|_{L^2}^2
\end{align*}
}
{Using once again the regularity theory for elliptic equations and the fact that the Sobolev embeddings $W^{1,2}(\Omega) \hookrightarrow L^q(\Omega)$, $W^{1,q}(\Omega) \hookrightarrow L^{\infty}(\Omega)$ for $3< q \leq 6$ lead to
	\begin{align*}
		\int_{0}^{T} \| \Grad \vu : \Grad \vu \|^2_{L^q (\Omega)}  \dt &\leq \int_{0}^{T} \| \Grad \vu \|^2_{L^q(\Omega; R^{3\times 3})} \| \Grad \vu \|_{L^{\infty}(\Omega; R^{3\times 3})}^2  \dt \\
		&\lesssim \int_{0}^{T} \| \Grad \vu \|^2_{W^{1, 2}(\Omega; R^{3\times 3})} \| \Grad \vu \|_{W^{1,q} (\Omega; R^{3\times 3})}^2  \dt \\
		&\lesssim \sup_{t \in [0, T]}  \| \vu(t, \cdot)\|^2_{W^{2,2} (\Omega; R^3)} \int_{0}^{T}  \|  \vu \|_{W^{2,q}(\Omega; R^3)}^2  \dt,
	\end{align*}
	we get the extra regularity for the temperature $\vt$:
\begin{equation*}
	\begin{aligned}
		\sup_{t \in [0,T]} &\left\| \vt(t, \cdot) \right\|_{W^{2,2}(\Omega)} + \int_0^T \| \vt \|^2_{W^{2,q}(\Omega)} \dt \\
		&\leq \Lambda \left( T + 	
		\| (\vr_0, \vt_0, \vu_0) \|_{\mathcal{D}_{ChK,I}} + \| (\vuB, \vtB, q_B) \|_{\mathcal{D}_{ChK,B}} + 
		\left\| \left(\frac{1}{\vr_0}, \frac{1}{\vt_0}\right) \right\|_{C(\Ov{\Omega}; R^2)} \right. \br
		&+ \left. \left\| (\vr, \vt, \vu ) \right\|_{L^\infty((0,T) \times \Ov{\Omega}; R^5)} + \| G \|_{W^{2,2}(\Omega)} \right).
	\end{aligned}
\end{equation*}
}
{Indeed, 
\begin{align*}
	\sup_{t \in [0,T]} \left\| \vt(t, \cdot) \right\|_{W^{2,2}} \lesssim \sup_{t \in [0,T]} \left(\left\| \partial_t \vt(t, \cdot) \right\|_{L^2} + \left\| \Grad  \vt(t, \cdot) \right\|_{L^2} + \| \Grad \vu (t, \cdot) \|_{L^4}^2 + \| \Grad \vu (t, \cdot) \|_{L^2}\right) \\
	\lesssim \sup_{t \in [0,T]} \left(\left\| \partial_t \vt(t, \cdot) \right\|_{L^2} + \left\|  \vt(t, \cdot) \right\|_{W^{1,2}} + \| \vu (t, \cdot) \|_{W^{2,2}}^2 + \| \vu (t, \cdot) \|_{W^{1,2}}\right),
\end{align*}
and
\begin{align*}
	\int_{0}^{T} \| \vt \|^2_{W^{2,q}} \dt &\lesssim  \int_{0}^{T} \left(\| \partial_t \vt \|^2_{L^q} + \| \Grad \vt \|^2_{L^q} + \| |\Grad \vu|^2 \|^2_{L^q} + \| \Grad \vu \|^2_{L^q}  \right) \dt \\
	&\lesssim  \int_{0}^{T} \left(\| \partial_t \vt \|^2_{W^{1,2}} + \|  \vt \|^2_{W^{1,q}} + \| \Grad \vu\|_{L^{\infty}}^2  \| \Grad \vu \|^2_{L^q} + \|  \vu \|^2_{W^{1,q}}  \right) \dt  \\
	&\lesssim  \int_{0}^{T} \left(\| \partial_t \vt \|^2_{W^{1,2}} + \|  \vt \|^2_{W^{2,2}} + \| \vu\|_{W^{2,q}}^2  \| \vu \|^2_{W^{2,2}} + \|  \vu \|^2_{W^{2,2}}  \right) \dt. 
\end{align*}
{Putting all the estimates together and using the minimum principle for the temperature \eqref{p5a}, we end up to}
 \begin{align} 
 	&\sup_{t \in [0,T]} \left( \| \vr (t, \cdot) \|_{W^{1,{ q}}(\Omega)} + \| \partial_t \vr (t,\cdot) \|_{L^{ q}(\Omega)} \right)	\br 
 	&+ \sup_{t \in [0,T]} \left( \left\| \vu(t, \cdot) \right\|_{W^{2,2}(\Omega; R^3)}
 	+ \left\| \partial_t \vu(t, \cdot) \right\|_{L^{2}(\Omega; R^3)}
 	\right) + \sup_{t \in [0,T]} \left( \left\| \vt(t, \cdot) \right\|_{W^{2,2}(\Omega)}
 	+ \left\| \partial_t \vt(t, \cdot) \right\|_{L^{2}(\Omega)}
 	\right)
 	\br  
 	&+ \int_0^T \| \vu \|^2_{W^{2,{ q}}(\Omega; R^3)} \dt + \int_0^T \| \partial_t \vu \|^2_{W^{1,2}(\Omega; R^3)} \dt + \int_0^T \| \vt \|^2_{W^{2,{ q}}(\Omega)} \dt + \int_0^T \| \partial_t \vt \|^2_{W^{1,2}(\Omega)} \dt \br 
 	&+ \sup_{t \in [0,T]} \left\| \left( \frac{1}{\vr} , \frac{1}{\vt} \right) {(t, \cdot)}\right\|_{C(\Ov{\Omega}; R^2)} \br 
 	&\leq \Lambda \left( T + 	
 	 \| (\vr_0, \vt_0, \vu_0) \|_{\mathcal{D}_{ChK,I}} + \| (\vuB, \vtB, q_B) \|_{\mathcal{D}_{ChK,B}} + 
 	\left\| \left(\frac{1}{\vr_0}, \frac{1}{\vt_0}\right) \right\|_{C(\Ov{\Omega}; R^2)} \right. \br
 	&+ \left. \left\| (\vr, \vt, \vu ) \right\|_{L^\infty((0,T) \times \Ov{\Omega}; R^5)} + \| G \|_{W^{2,2}(\Omega)} \right).
 	\label{pr5}
 \end{align}
 This is nothing other than \eqref{c5a}.
 
 The proof of \eqref{c5} can be obtained exactly as in \cite{BaFeMi} passing to higher order derivatives. 
 We have therefore completed the proof of Proposition \ref{PCR1}.

\subsection{Conditional regularity estimates in the $L^p-L^q$ setting}

The reader will have noticed that the conditional regularity estimates in terms of the norm $\| (\vr, \vt, \vu) \|_{C([0,T] \times \Ov{\Omega}; R^5)}$ require 
differentiating the field equations in time. Unfortunately, this cannot be performed in a direct manner within the $L^p-L^q$ framework. Instead, we report the following auxiliary result.

\begin{Proposition}[{\bf Conditional regularity {estimates} - $L^p-L^q$ framework}] 
	\label{PCR2}
	Let the hypotheses of Proposition \ref{PL1} be satisfied. Let  $(\vr, \vt, \vu)$ be the unique solution of \textup{NSF} system defined on some time interval $(0,T)$.
	Let
	\begin{equation} \label{hhyp}
	3 < q < \infty,\ \frac{2q}{2q - 3} < p < \infty.
	\end{equation}
	
	Then there exists a non--decreasing (positive) function 
	$\Lambda : [0, \infty) \to (0, \infty)$ such that 
	the following estimate
	\begin{align} 
		&\left\| (\vr, \vt, \vu) \right\|_{\mathcal{S}_{p,q} [0, \tau]} + \sup_{t \in [0, \tau]} \left\| \left(\frac{1}{\vr}, \frac{1}{\vt} \right) { (t, \cdot)}\right\|_{C(\Ov{\Omega}; R^2)} 
		\br &\leq 
		\Lambda \left(\left\| (\vr_0, \vt_0, \vu_0) \right\|_{\mathcal{D}_{p,q,I}} + \left\| (\vuB, \vtB, q_B) \right\|_{\mathcal{D}_{p,q,B}}  
		+ \left\| \left(\frac{1}{\vr_0}, \frac{1}{\vt_0} \right) \right\|_{C(\Ov{\Omega}; R^2)}  + \| G \|_{W^{1,q}(\Omega)}\right. \br  &\quad + \left.  \left\| (\vr, \vt, \vu) \right\|_{C([0,\tau] \times \Ov{\Omega}; R^5)} + \int_0^{\tau} \| (\vt, \vu) \|^p_{W^{1, \infty}(\Omega; R^4)} \dt 	\right) 
		\label{CR1}
	\end{align}
holds for any $0 \leq \tau \leq T_{\rm eff} \wedge T$, where 
\begin{align} \label{CR2}
		T_{\rm eff} &\geq \Lambda^{-1} \left( \left\| (\vr_0, \vt_0, \vu_0) \right\|_{\mathcal{D}_{p,q,I}} + \left\| \frac{1}{\vr_0} \right\|_{C(\Ov{\Omega})} + \| G \|_{W^{1,q}(\Omega)} \right. \br &\left. + \sup_{t \in [0,T]} \| \vt \|_{C(\Ov{\Omega})} + \| \vu \|_{L^p (0,T; W^{1, \infty}(\Omega; R^3))} \right) > 0.
	\end{align}

\end{Proposition}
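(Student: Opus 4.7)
The strategy is to apply $L^p$-$L^q$ maximal parabolic regularity to the momentum and heat equations, treating their $\vr$-dependent leading coefficients by the standard coefficient-freezing device and the remaining nonlinearities as forcing terms, while the density is handled by the method of characteristics. First, from the transport identity $\partial_t \vr + \vu \cdot \Grad \vr = -\vr \Div \vu$, the assumed control of $\int_0^\tau \|\vu\|_{W^{1,\infty}}^p \dt$ together with Danchin-type characteristic estimates bounds $\sup_{[0,\tau]}\|\vr\|_{W^{1,q}(\Omega)}+\|\partial_t\vr\|_{L^p(L^q)}$ in terms of the data and of $\|\vu\|_{L^p(W^{2,q})}$, and the minimum principle \eqref{p4a} bounds $\|1/\vr\|_{C(\Ov{\Omega})}$. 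Moreover $\|\vr(t,\cdot)-\vr_0\|_{L^\infty}\lesssim \tau^{1-1/p}\bigl(\int_0^\tau \|\vu\|_{W^{1,\infty}}^p \dt\bigr)^{1/p}$, which is small on $[0,T_{\rm eff}]$.

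Using the extensions \eqref{BD1}, \eqref{BD2} to shift boundary data to zero, one rewrites \eqref{p2a}--\eqref{p3a} as the linear parabolic systems
\[
\partial_t (\vu-\vuB) - \vr^{-1}\Div\mathbb{S}(\Ds(\vu-\vuB)) = F_\vu, \qquad \partial_t(\vt-\vtB) - \kappa(c_v\vr)^{-1}\Del(\vt-\vtB) = F_\vt,
\]
with homogeneous boundary data, where $F_\vu = -\vu\cdot\Grad\vu - \vt\Grad\log\vr - \Grad\vt + \Grad G + \vr^{-1}\Div\mathbb{S}(\Ds\vuB)$ and $F_\vt = -\vu\cdot\Grad\vt + (c_v\vr)^{-1}\mathbb{S}(\Ds\vu){:}\Ds\vu - c_v^{-1}\vt\Div\vu + \kappa(c_v\vr)^{-1}\Del\vtB$. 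Freezing the coefficients at $t=0$ reduces us to unperturbed operators whose maximal $L^p$-$L^q$ regularity constants (Kotschote, Denk-Hieber-Pr\"uss type) depend only on $\|\vr_0\|_C$ and $\|1/\vr_0\|_C$, and the perturbations involving $\vr^{-1}-\vr_0^{-1}$ are absorbable into the left-hand side on $[0,T_{\rm eff}]$ by Step 1. Each lower-order term in $F_\vu,F_\vt$ is bounded by H\"older and Gagliardo-Nirenberg interpolation together with the conditional $L^\infty$ and $L^p(W^{1,\infty})$ controls, typically producing small-time absorbable contributions of the form $\tau^{\alpha}\Lambda(\|(\vr,\vt,\vu)\|_C)\|(\vr,\vt,\vu)\|_{\mathcal{S}_{p,q}}$. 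The most delicate term is the viscous dissipation, which is handled via the three-dimensional Gagliardo-Nirenberg inequality $\||\Grad\vu|^2\|_{L^q}\leq C\|\vu\|_{W^{2,q}}\|\vu\|_{L^\infty}$ (valid for $q>3$), giving
\[
\|\mathbb{S}(\Ds\vu){:}\Ds\vu\|_{L^p(0,\tau;L^q)} \leq C\|\vu\|_{C([0,\tau]\times\Ov{\Omega})}\,\|\vu\|_{L^p(W^{2,q})},
\]
which is linear in the unknown $\mathcal{S}_{p,q}$-norm with coefficient controlled by the conditional $L^\infty$ hypothesis.

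Summing the three blocks and applying Young's inequality to the absorbable products yields a schematic estimate
\[
\|(\vr,\vt,\vu)\|_{\mathcal{S}_{p,q}[0,\tau]} \leq \Lambda_0(\mathrm{data},\mathrm{controls}) + \tau^{\beta}\,\Lambda_1\bigl(\|(\vr,\vt,\vu)\|_{C}\bigr)\,\|(\vr,\vt,\vu)\|_{\mathcal{S}_{p,q}[0,\tau]}
\]
for some $\beta>0$. Choosing $\tau\leq T_{\rm eff}$ so that $\tau^{\beta}\Lambda_1\leq \tfrac{1}{2}$ absorbs the second summand into the left-hand side and delivers \eqref{CR1}, while tracking the threshold gives precisely the quantitative lower bound \eqref{CR2}. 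The main obstacle, and the reason why the ``effective time'' $T_{\rm eff}$ must appear in the statement, is to secure uniform $L^p$-$L^q$ maximal regularity constants for the variable-coefficient operators $-\vr^{-1}\Div\mathbb{S}(\Ds\cdot)$ and $-\kappa(c_v\vr)^{-1}\Del$ on an interval whose length depends only on the data and the given controls; this is exactly what the freezing trick combined with the transport smallness of $\vr^{-1}-\vr_0^{-1}$ accomplishes, and it is the same quasi-linear structure of the quadratic heating that makes the $L^p(W^{1,\infty})$ control indispensable in the conditional bound.
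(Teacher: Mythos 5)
Your overall architecture coincides with the paper's: extension of the boundary data via \eqref{BD1}--\eqref{BD2}, rewriting \eqref{p2a}--\eqref{p3a} with the elliptic coefficients frozen at $\vr_0$, maximal $L^p$--$L^q$ regularity of Denk--Hieber--Pr\"uss type, the density and $1/\vr$ handled along characteristics (with Kotschote's gradient bound), and absorption of the perturbation $(\tfrac{1}{\vr}-\tfrac{1}{\vr_0})$ on a short effective interval $T_{\rm eff}$ determined by the data and $\int_0^T\|\vu\|^p_{W^{1,\infty}}\dt$. Up to a minor imprecision (your bound for $\|\vr(t,\cdot)-\vr_0\|_{L^\infty}$ should carry a H\"older power $1-3/q$ coming from $\vr_0\in W^{1,q}$, not a linear dependence on the displacement), these steps match the paper.

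The genuine gap is in the dissipation term, which is exactly the delicate point of the proposition. Your estimate $\|\mathbb{S}(\Ds\vu):\Ds\vu\|_{L^p(0,\tau;L^q)}\leq C\,\|\vu\|_{C([0,\tau]\times\Ov{\Omega})}\,\|\vu\|_{L^p(0,\tau;W^{2,q})}$ is linear in the unknown $\mathcal{S}_{p,q}$-norm, but its coefficient $\Lambda(\|\vu\|_C)$ is neither small nor accompanied by any positive power of $\tau$; hence it does not fit your schematic bound with the factor $\tau^\beta\Lambda_1$ and cannot be absorbed into the left-hand side. Shrinking $T_{\rm eff}$ does not help, both because no power of $\tau$ is present and because \eqref{CR2} does not allow $T_{\rm eff}$ to be chosen small in dependence on $\sup_t\|\vu\|_{C(\Ov{\Omega})}$; note also that $\|\vu\|_{L^p(W^{2,q})}$ is precisely part of what is being estimated, so leaving it on the right with an $O(1)$ coefficient defeats the inequality. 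A telltale sign is that your argument never uses the hypothesis $p>\frac{2q}{2q-3}$ in \eqref{hhyp}: in the paper this restriction enters exactly here, through the Gagliardo--Nirenberg/trace-space interpolation $\|\Grad\vu\|_{L^{2q}}\lesssim\|\vu\|_{W^{1+\alpha/2,q}}\leq\|\vu\|^{1/2}_{W^{\alpha,q}}\|\vu\|^{1/2}_{W^{2,q}}$ with $\alpha<2(1-\tfrac1p)$, the embedding $W^{1+\alpha/2,q}\hookrightarrow W^{1,2q}$ (which forces \eqref{hhyp}), and the $T$-independent control of $\sup_t\|\vu\|_{W^{\alpha,q}}$ by the $\mathcal{S}_{p,q}$-norm via the Besov trace estimate. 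To repair your proof you must replace the $L^\infty$-endpoint Gagliardo--Nirenberg step by an argument of this type (or otherwise extract a genuinely small or controlled factor for the quadratic heating), since the conditional controls $\|(\vr,\vt,\vu)\|_C$ and $\int_0^\tau\|(\vt,\vu)\|^p_{W^{1,\infty}}\dt$ alone do not dominate $\int_0^\tau\|\Grad\vu\|^{2p}_{L^{2q}}\dt$.
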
	

\begin{Remark} \label{RCR2}
Obviously, the conclusion of Proposition \ref{PCR2} is ``weaker'' than that of Proposition \ref{PCR1} as the right--hand side of 
\eqref{CR1} depends on the norm 
$ \| (\vt, \vu) \|_{L^p (0,T; W^{1, \infty}(\Omega; R^3))}$, and 
the result is ``local'' in view of \eqref{CR2}. We will remove this drawback in the last part of the paper.
	\end{Remark}
	
\begin{proof}

First,  we rewrite the system \eqref{p1a}--\eqref{p3a} in the form 	
\begin{align}
	\partial_t b(\vr) + \vu \cdot \Grad b(\vr) &= - b'(\vr) \vr \Div \vu, \label{m4}, \\ 
	\partial_t (\vu - \vu_B)  &- \frac{1}{\vr_0} \Div \mathbb{S}(\Ds (\vu - \vu_B))  = \left( \frac{1}{\vr} - \frac{1}{\vr_0} \right) \Div \mathbb{S}(\Ds (\vu - \vu_B)) \br  &- \vu \cdot \Grad \vu
	- \vt \Grad \log(\vr) - \Grad \vt  + \Grad G, 
	\label{m5}\\ 
	\partial_t (\vt -\vt_B) - \frac{\kappa}{c_v \vr_0} \Del (\vt - \vt_B)   &= \left( \frac{\kappa}{c_v \vr} - \frac{\kappa}{c_v \vr_0} \right) \Del (\vt - \vt_B) - \vu \cdot \Grad \vt\br  &+ \frac{1}{c_v \vr}\mathbb{S}(\Ds \vu): \Ds \vu - \frac{1}{c_v} \vt \Div \vu, 
	\label{m6}
\end{align}
where $b$ in \eqref{m4} is a continuously differentiable function, and $\vu_B$, $\vt_B$ are the extensions of the boundary data introduced in \eqref{BD1}, \eqref{BD2}. In particular,
the functions $(\vu - \vu_B)$, $(\vt - \vt_B)$ satisfy the homogeneous boundary conditions.

 At this stage, we use the $L^p-L^q$ theory for parabolic equations, see 
 Denk, Hieber, and Pr\"uss \cite[Theorem 2.3] {DEHIEPR}, to deduce the estimates
 \begin{align}
 	&\int_0^T \| \partial_t (\vu - \vu_B) \|_{L^q(\Omega; R^3)}^p \dt + 
 	\int_0^T \left\| \frac{1}{\vr_0} \Div \mathbb{S}(\Ds (\vu - \vu_B) ) \right\|_{L^q(\Omega; R^3)}^p \dt \br 
 	&\leq  \Lambda \left( \left\| \frac{1}{\vr_0} \right\|_{C(\Ov{\Omega})} + \| \vr_0 \|_{W^{1,q}(\Omega)} \right) \left[  \int_0^T  \left\| \left( \frac{1}{\vr} - \frac{1}{\vr_0} \right) \Div \mathbb{S}(\Ds (\vu - \vu_B)) \right\|_{L^q(\Omega; R^3)}^p \dt \right. \br &+ 
 	\int_0^T \| \vu \cdot \Grad \vu \|_{L^q(\Omega; R^3)}^p  \dt  
 	+ \int_0^T  \| \Grad \vt \|_{L^q(\Omega; R^3)}^p  \dt  + \int_0^T \| \Grad G \|_{L^q(\Omega; R^3)}^p \br
 	&+ \int_0^T \| \vt \Grad \log(\vr) \|_{L^q(\Omega; R^3)}^p  \dt + \left\| \vu_0 - \vu_B \right\|^p_{B^{2(1- \frac{1}{p})}_{q,p}; R^3)}  \Big],
 	\label{MAX1}
 \end{align}
 and
 \begin{align}
 	\int_0^T &\| \partial_t (\vt - \vt_B) \|_{L^q(\Omega)}^p \dt + 
 	\int_0^T \left\| \frac{\kappa}{c_v \vr_0} \Del (\vt - \vt_B) ) \right\|_{L^q(\Omega)}^p \dt \br 
 	&\leq \Lambda \left( \left\| \frac{1}{\vr_0} \right\|_{C(\Ov{\Omega})} + \| \vr_0 \|_{W^{1,q}(\Omega)} \right)  \left[ \int_0^T  \left\| \left( \frac{1}{\vr} - \frac{1}{\vr_0} \right) \Del (\vt - \vt_B)) \right\|_{L^q(\Omega)}^p \dt \right. \br 
 	& + 
 	\int_0^T \left( \| \vu \cdot \Grad \vt) \|_{L^q(\Omega)}^p     \right) \dt+ \int_0^T  \left\| \frac{1}{\vr} \mathbb{S}(\Ds \vu) : \Ds \vu \right\|_{L^q(\Omega; R^3)}^p  \dt  + \int_0^T \left\| \frac{1}{c_v} \vt \Div \vu  \right\|_{L^q(\Omega)}^p   \dt\br
 	&+ \int_0^T \left\|\vt_0 - \vtB \right\|_{B^{2(1 - \frac{1}{p})}_{q,p} }\Big].
 	\label{MAX2}
 \end{align}
 Note that the constant $\Lambda$ depends, in general, on the modulus of continuity of $\vr_0$ and the ellipticity constants of the associated linear operator, 
 see e.g. Krylov \cite{Krylov}, Schlag \cite{Schlag}, or Solonnikov \cite{SoloI}. 
	
Moreover, using \eqref{BD1}, \eqref{BD2} we may rewrite the above inequalities in the form  
\begin{align}
	&\int_0^T \left[ \| \partial_t \vu  \|_{L^q(\Omega; R^3)}^p  + 
	\left\| \vu \right\|_{W^{2,q}(\Omega; R^3)}^p \right] \dt \br 
	&\aleq  \Lambda \left( \left\| \frac{1}{\vr_0} \right\|_{C(\Ov{\Omega})} + \| \vr_0 \|_{W^{1,q}(\Omega)} \right)  \left[  \int_0^T  \left\| \left( \frac{1}{\vr} - \frac{1}{\vr_0} \right) \Div \mathbb{S}(\Ds \vu) \right\|_{L^q(\Omega; R^3)}^p \dt \right. \br&+ 
	\int_0^T \| \vu \cdot \Grad \vu \|_{L^q(\Omega; R^3)}^p  \dt   
	+ \int_0^T  \| \Grad \vt \|_{L^q(\Omega; R^3)}^p  \dt  + \int_0^T \| \Grad G \|_{L^q(\Omega; R^3)}^p\br
	& + \left. \int_0^T \| \vt \Grad \log(\vr) \|_{L^q(\Omega; R^3)}^p  \dt+  \| \vu_0 \|_{B^{2(1 - \frac{1}{p})}_{q,p}(\Omega; R^3)}^p + (1 + T)\| \vuB \|^p_{W^{2- \frac{1}{q},q}({ \partial \Omega}; R^3)} \right],
	\label{m7}
\end{align}
and
\begin{align}
	&\int_0^T \left[ \| \partial_t \vt  \|_{L^q(\Omega)}^p \dt + 
	\int_0^T \left\| \vt \right\|_{W^{2,q} (\Omega)}^p \right] \dt \br 
	&\aleq \Lambda \left( \left\| \frac{1}{\vr_0} \right\|_{C(\Ov{\Omega})} + \| \vr_0 \|_{W^{1,q}(\Omega)} \right) \left[ \int_0^T  \left\| \left( \frac{1}{\vr} - \frac{1}{\vr_0} \right) \Del \vt \right\|_{L^q(\Omega)}^p \dt + 
	\int_0^T \left( \| \vu \cdot \Grad \vt) \|_{L^q(\Omega)}^p     \right) \dt \right. \br 
	&+ \int_0^T  \left\| \frac{1}{\vr} \mathbb{S}(\Ds \vu) : \Ds \vu \right\|_{L^q(\Omega)}^p  \dt  + \int_0^T \left\|  \vt \Div \vu  \right\|_{L^q(\Omega)}^p   \dt\br
	&+  \| \vt_0 \|_{B^{2(1 - \frac{1}{p})}_{q,p}(\Omega)} +  (1 + T)\| \vtB \|^2_{W^{2- \frac{1}{q},q}({ \Gamma_D})}  \Big].
	\label{m8}	
\end{align}

Next, we use the estimate on the density gradient proved by Kotschote \cite[Lemma 4.2 and comments]{KOT6}:
\begin{equation} \label{b7}
\sup_{\tau \in (0,T)} \| \Grad \vr (\tau, \cdot) \|_{L^q(\Omega; R^3)} 
\aleq \exp \left( 2 \int_0^T \| \Div \vu \|_{L^\infty(\Omega)} \dt \right)
\Big( \| \Grad \vr_0 \|_{L^q(\Omega{;R^3})} + \int_0^T \| \vu \|_{W^{2,q}(\Omega; R^3)} \Big).
\end{equation}
In particular, 
\begin{align}
&\int_0^T \| \vt \Grad \log(\vr) \|^p_{L^q(\Omega)} \dt \leq T \left[ \sup_{t \in (0,T)} \| \vt \|_{L^\infty(\Omega)} 
\sup_{t \in (0,T)} \left\| \frac{1}{\vr} \right\|_{L^\infty(\Omega)}  \sup_{t \in (0,T)} \| \Grad \vr  \|^p_{L^q(\Omega; R^3)} \right] \br 
&\aleq T \left[ \sup_{t \in (0,T)} \| \vt \|_{L^\infty(\Omega)} 
\sup_{t \in (0,T)} \left\| \frac{1}{\vr} \right\|_{L^\infty(\Omega)} \exp \left( 2 p \int_0^T \| \Div \vu \|_{L^\infty(\Omega)} \dt \right) \times \right. 
\br &\left.
\times \Big( \| \Grad \vr_0 \|^p_{L^q(\Omega {;R^3})} + T^{p-1} \int_0^T \| \vu \|^p_{W^{2,q}(\Omega; R^3)} \Big)  \right].
\label{b7a}	
	\end{align}
	
Finally, computing $\frac{1}{\vr}$ along characteristics we have 
\begin{align} 
	\frac{1}{\vr (t, X(t,x))} &= \frac{1}{\vr_0(x)} \exp \left( \int_0^t \Div \vu(s, X(s,x) \D s \right), 
\br
X'(t,x) &= \vu(t, X(t,x)),\ X(0,x) = x.
\nonumber
\end{align}
Consequently, 
\begin{align}
	\frac{1}{\vr (t, X(t,x))} - 	\frac{1}{\vr_0 (X(t,x))} = 
\frac{1}{\vr_0(x)} \left[ \exp \left( \int_0^t \Div \vu(s, X(s,x) \D s \right) - 1 \right] + 
\frac{1}{\vr_0(x)} -	\frac{1}{\vr_0 (X(t,x))}.
\nonumber	
	\end{align}
As $\vr_0 \in W^{1,q}(\Omega)$, $q > 3$, $\vr_0$ bounded below, the function $\frac{1}{\vr_0}$ is H\" older continuous. In particular,
\[ 
\left| \frac{1}{\vr_0(x)} -	\frac{1}{\vr_0 (X(t,x))} \right| \aleq \left| x - X(t,x) \right|^\beta = 
 \left| \int_0^t \vu (x, X(s,x)) \D s \right|^\beta
\]
{for some $0<\beta <1$. }

Thus for any $\delta > 0$, we may find $T_{\rm eff} > 0$ small enough in terms of  $\int_0^{T} \| \vu \|^p_{W^{1,\infty}(\Omega; R^3)}$ and the norm of the data so 
that 
\[  
\sup_{\tau \in (0,T_{\rm eff})} \left\| \left( \frac{1}{\vr(\tau, \cdot) } - \frac{1}{\vr_0} \right) \right\|_{L^\infty(\Omega)} < \delta.
\]
In particular, the integrals 
\[
\int_0^T  \left\| \left( \frac{1}{\vr} - \frac{1}{\vr_0} \right) \Div \mathbb{S}(\Ds \vu) \right\|_{L^q(\Omega; R^3)}^p \ \mbox{and}\ \int_0^T  \left\| \left( \frac{1}{\vr} - \frac{1}{\vr_0} \right) \Del \vt  \right\|_{L^q(\Omega)}^p \dt
\]
can be ``absorbed'' by their counterparts on the left--hand side of the inequalities \eqref{m7} and \eqref{m8} respectively. The same applies to 
the last integral in \eqref{b7a} as long as $T = T_{\rm eff}$ is small enough. These observation complete the desired bounds in terms of $\vr$ and $\vu$.

Finally, going back to {\eqref{m8} }we observe
\begin{align}
\int_0^T &\left\| \frac{1}{\vr} \mathbb{S}(\Ds \vu) : \Ds \vu \right\|^p_{L^q(\Omega)} \br	&\aleq 
\left( \delta + \sup_{\Omega} \frac{1}{\vr_0} \right) \int_0^T \| |\Grad \vu|^2 \|_{L^q(\Omega)}^p \dt = 
\left( \delta + \sup_{\Omega} \frac{1}{\vr_0} \right) \int_0^T \| \Grad \vu \|^{2p}_{L^{2q}(\Omega; R^3)} \dt,
\nonumber
	\end{align}
{for any $0< T \leq T_{\rm eff}$.}
Using the general version of Gagliardo--Nirenberg inequality (see Brezis, Mironescu \cite[Theorem 1]{BreMir}) we obtain 
\begin{equation} \label{est1}
\| \vu \|_{W^{s,q}(\Omega; R^3)} \leq \| \vu \|^{\frac{1}{2}}_{W^{\alpha,q}(\Omega; R^3)} \| \vu \|_{W^{2,q}(\Omega;R^3)}^{\frac{1}{2}}, 
\end{equation}
where
\[
s = 1 + \frac{\alpha}{2} ,\ \mbox{and}\ \alpha < 2 (1 - \frac{1}{p}) \ \Rightarrow \ B^{2(1 - \frac{1}{p})}_{q,p}(\Omega; R^3) 
\hookrightarrow B^{\alpha}_{q,q}(\Omega; R^3) \approx W^{\alpha,q}(\Omega; R^3).
\] 
Consequently, we have 
\begin{equation} \label{est}
\sup_{t \in [0, T_{\rm eff}] } \| \vu \|_{W^{\alpha,q}(\Omega)}  
\leq c(T_{\rm eff}) \Big( \| \vu \|_{L^p(0,T_{\rm eff}; W^{2,q}(\Omega;R^3))} +  \| \partial_t \vu \|_{L^p(0,T_{\rm eff}; L^{q}(\Omega;R^3))} \Big)	 
\end{equation}
Note carefully that the constant $c(T_{\rm eff})$ may blow up for $T_{\rm eff} \to 0$. Fortunately, $T_{\rm eff} > 0$ has already been fixed in the previous step.

Going back to \eqref{est1}, we have to show 
\[
W^{s,q}(\Omega;R^3) \hookrightarrow W^{1, 2q}(\Omega;R^3)\ \Leftrightarrow \ W^{\frac{\alpha}{2},q}(\Omega;R^3) \hookrightarrow L^{2q}(\Omega;R^3).
\]  
This is true if $\frac{\alpha}{2} q > 3$. If $\frac{\alpha}{2} q < 3$, we need 
\[
\frac{3q}{3 - \frac{\alpha}{2} q } > 2 q.
\]
As $\frac{\alpha}{2}$ can be chosen arbitrarily close to $(1 - \frac{1}{p})$, the desired conclusion follows from hypothesis \eqref{hhyp}.
 
The proof is now complete. Note we have used the fact that the constants in the maximal regularity estimates \eqref{MAX1}, \eqref{MAX2}
remain the same replacing $T$ by $T_{\rm eff} \leq T$.

\end{proof}

\section{Local existence of strong solutions}
\label{LE}

We are in a position to prove the local existence of strong solutions in both the $L^p-L^q$ framework claimed in Theorem \ref{Tmain}
and the mixed setting for the inhomogeneous Dirichlet boundary conditions. To the best of our knowledge, these results are new. Moreover, 
the method can be seen as an alternative proof of known results avoiding the explicit construction of approximate schemes. 

\subsection{Local existence $L^p-L^q$ setting}
\label{exipq}

Our goal is to show the existence of local in time solutions claimed in Theorem \ref{Tmain}.
The solutions are constructed via regular approximations in Valli's class. As already pointed out, 
this approach requires higher regularity of the underlying spatial domain.
We proceed in several steps. 	

\subsubsection{Data approximation}

We consider a sequence of initial data 
\[
(\vr^\ep_0, \vt^\ep_0, \vu^\ep_0) \in \mathcal{D}_{V,I} \cap \mathcal{D}_{p,q,I}, 
\]
along with the boundary data 
\[
(\vtB^\ep, \vu^\ep_B, q_B^\ep) \in \mathcal{D}_{V,B} \cap \mathcal{D}_{p,q,B},
\]
satisfying the compatibility conditions \eqref{L9}--\eqref{L12b}, 
along with a sequence of approximate potentials 
\[
G^\ep \in W^{3,2}(\Omega).
\]
In addition, we suppose 
\begin{align} 
	(\vr_0^\ep, \vt^\ep_0, \vu^\ep_0) &\to (\vr_0, \vt_0, \vu_0) \ \mbox{in} \ \mathcal{D}_{p,q,I}, \br  
	(\vtB^\ep, \vu^\ep_B, q_B^\ep)  &\to (\vtB, \vuB, q_B) \ \mbox{in} \ \mathcal{D}_{p,q,B}, \br
	\vr_0^\ep &\geq \underline{\vr} > 0, \vt^\ep_0 \geq \underline{\vt} > 0,\ \vt^\ep_B \geq \underline{\vt} > 0,\br 
	G^\ep &\to G \ \mbox{in}\ W^{1,q}(\Omega)
	\label{LE1}
\end{align}
as $\ep \to 0$.
Let $(\vre, \vte, \vue)_{\ep > 0}$ be the associated sequence of solutions 
to the NSF system in the space $\mathcal{S}_V[0,T]$, $0 < T < T^\ep_{{\rm max},V}$, 
the existence of which is guaranteed by Proposition \ref{PL1}. 

\subsubsection{Control functionals and hitting times}

We start with the following auxiliary result.

\begin{Lemma} \label{LLE1}
Suppose 
\begin{equation} \label{LE2a}
		3 < q < \infty, \ \frac{2q}{2q - 3} < p < \infty.
		\end{equation}
		
Then the embeddings 
\begin{equation} \label{LE2}
\mathcal{S}_{p,q}[0,T]  \hookrightarrow \hookrightarrow C([0,T] \times \Ov{\Omega}; R^4),
\end{equation}
and
\begin{align} 	 
L^p(0,T; W^{2,q}(\Omega)) \cap W^{1,p}(0,T; L^q(\Omega)) \hookrightarrow \hookrightarrow L^p (0,T; W^{1,\infty}(\Omega))
\label{LE3}
\end{align}	
are compact. In both cases, the embedding constant is independent of $T$.
	\end{Lemma}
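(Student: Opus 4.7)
The plan is to prove the two compact embeddings separately, and within \eqref{LE2} to treat the $\vr$--component separately from $(\vt,\vu)$ since they live in different parabolic scales inside $\mathcal{S}_{p,q}[0,T]$. The embedding \eqref{LE3} will serve as a warm--up. For $q>3$ the Sobolev--Morrey embedding yields $W^{2,q}(\Omega) \hookrightarrow C^{1,1-3/q}(\overline{\Omega})$, hence the compact chain $W^{2,q} \hookrightarrow\hookrightarrow W^{1,\infty} \hookrightarrow L^q(\Omega)$. The Aubin--Lions--Simon compactness theorem then gives compactness of $L^p(0,T; W^{2,q}) \cap W^{1,p}(0,T; L^q)$ into $L^p(0,T; W^{1,\infty})$, with no restriction on $p$ beyond $1 \leq p < \infty$. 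The continuous embedding $W^{2,q} \hookrightarrow W^{1,\infty}$ is pointwise in time and manifestly $T$--independent.

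For \eqref{LE2}, first consider the $(\vt,\vu)$-part. The standard trace characterization of maximal regularity spaces, cf.\ \cite[Chapter III, Theorem 4.10.2]{AMA} together with \cite[Theorem 2.3]{DEHIEPR}, yields
\[
L^p(0,T; W^{2,q}) \cap W^{1,p}(0,T; L^q) \hookrightarrow C([0,T]; B^{2(1-1/p)}_{q,p}(\Omega)),
\]
with embedding constant \emph{independent of $T$} precisely because the norm \eqref{L3g} includes the Besov norm of the initial trace. The hypothesis $p > 2q/(2q-3)$ is equivalent to $2(1-1/p) > 3/q$, so by Amann \cite{Amann2000} we have the compact embedding $B^{2(1-1/p)}_{q,p}(\Omega) \hookrightarrow\hookrightarrow C(\overline{\Omega})$, together with $B^{2(1-1/p)}_{q,p}(\Omega) \hookrightarrow C^\beta(\overline{\Omega})$ for some $\beta>0$. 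Compactness of a bounded sequence in $C([0,T] \times \overline{\Omega})$ is then extracted by Arzelà--Ascoli: at each $t$, pointwise relative compactness is the compact Besov embedding, while time equicontinuity is obtained by interpolating the uniform Hölder bound in $C^\beta(\overline{\Omega})$ against the elementary estimate $\|u(t)-u(s)\|_{L^q} \leq |t-s|^{1-1/p}\|\partial_t u\|_{L^p(0,T; L^q)}$ through a Gagliardo--Nirenberg type bound of the form $\|g\|_{L^\infty(\Omega)} \leq C \|g\|_{C^\beta(\overline{\Omega})}^{1-\theta} \|g\|_{L^q(\Omega)}^{\theta}$, $\theta = \beta/(\beta + 3/q)$. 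For the $\vr$-part, boundedness in $C([0,T]; W^{1,q}(\Omega))$ combined with $q>3$ gives pointwise values in a $C(\overline{\Omega})$-compact set via $W^{1,q} \hookrightarrow\hookrightarrow C(\overline{\Omega})$, while the same interpolation scheme --- with $W^{1,q}$ in place of $C^\beta$ --- together with $\|\vr(t)-\vr(s)\|_{L^q} \leq |t-s|^{1-1/p}\|\partial_t \vr\|_{L^p(0,T; L^q)}$ yields Hölder equicontinuity in $C(\overline{\Omega})$ uniformly in the sequence.

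The only nontrivial technical point, and the one I expect to require care, is verifying that the embedding constants really are $T$--independent. This is precisely where the exact form of \eqref{L3g} --- the inclusion of $\|(\vt,\vu)(0,\cdot)\|_{B^{2(1-1/p)}_{q,p}}$ and the sup-in-time (rather than $L^p$-in-time) norm on $\vr$ --- is essential; without these, the maximal--regularity trace estimate and the Gagliardo--Nirenberg interpolation bound would both degenerate as $T$ varies. Once $T$--independence of the two underlying continuous embeddings is established, compactness follows uniformly in $T$ and the lemma is proved.
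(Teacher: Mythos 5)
Your route is essentially the paper's: you reduce \eqref{LE2} to a $T$--uniform embedding of $\mathcal{S}_{p,q}[0,T]$ into $C\big([0,T]; W^{1,q}(\Omega)\times B^{2(1-\frac{1}{p})}_{q,p}(\Omega)\times B^{2(1-\frac{1}{p})}_{q,p}(\Omega;R^3)\big)$, use the compact spatial embeddings (your observation that $p>\frac{2q}{2q-3}$ is exactly $2(1-\frac1p)>\frac3q$, and Amann \cite{Amann2000}), and conclude by Arzel\`a--Ascoli; for \eqref{LE3} you use the same chain $W^{2,q}\hookrightarrow\hookrightarrow W^{1,\infty}\hookrightarrow L^q$ plus Aubin--Lions/Simon as the paper does. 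Your interpolation argument for time--equicontinuity (the H\"older--in--time $L^q$ estimate from $\partial_t(\cdot)\in L^p(0,T;L^q)$ played against the uniform $C^\beta$, resp.\ $W^{1,q}$, bound) is a correct and welcome way of making the paper's ``direct application of Arzel\`a--Ascoli'' explicit.

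The one step you defer --- the $T$--independence of the constant in $L^p(0,T;W^{2,q})\cap W^{1,p}(0,T;L^q)\hookrightarrow C([0,T];B^{2(1-\frac{1}{p})}_{q,p}(\Omega))$ --- is, however, precisely the content of the paper's proof, and ``because \eqref{L3g} contains the initial trace'' is the right reason but not yet an argument: the trace estimate as such degenerates as $T\to0$. The paper closes this as follows. Write $(\vt,\vu)=(\vt-\tvt,\vu-\tvu)+(\tvt,\tvu)$, where $(\tvt,\tvu)$ is an extension of the initial value to $(0,\infty)$ chosen nearly optimally, i.e.\ with $\int_0^\infty\big[\|(\tvt,\tvu)\|^p_{W^{2,q}}+\|\partial_t(\tvt,\tvu)\|^p_{L^q}\big]\le 2\,\|(\vt,\vu)(0,\cdot)\|^p_{B^{2(1-1/p)}_{q,p}}$, which is possible since the Besov norm is (equivalent to) the infimum of this quantity over all extensions. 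For the difference, which vanishes at $t=0$, the maximal--regularity trace estimate holds with a constant independent of $T$, cf.\ Denk \cite[Lemma 2.4]{Denk}; for the extension, $\sup_{t\in(0,T)}\|(\tvt,\tvu)(t,\cdot)\|_{B^{2(1-1/p)}_{q,p}}$ is dominated by the corresponding supremum over $(0,\infty)$ and hence by the initial Besov norm, again $T$--independently. Both contributions are then absorbed into the norm \eqref{L3g}, which is exactly where its inclusion of $\|(\vt,\vu)(0,\cdot)\|_{B^{2(1-1/p)}_{q,p}}$ is used. With this supplement your argument is complete and coincides with the paper's.
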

	
\begin{proof}
First, we show 
\begin{equation} \label{es1}
S_{p,q}[0,T] \hookrightarrow C \Big([0,T]; W^{1,q}(\Omega) \times B^{2(1 - \frac{1}{p})}_{q,p}(\Omega) \times B^{2(1 - \frac{1}{p})}_{q,p}(\Omega;R^3) \Big),
\end{equation}
where the embedding constant is independent of $T$. As the embedding 
\[
W^{1,q}(\Omega) \times B^{2(1 - \frac{1}{p})}_{q,p}(\Omega) \times B^{2(1 - \frac{1}{p})}_{q,p}(\Omega;R^3) \hookrightarrow \hookrightarrow
C(\Ov{\Omega}; R^5)
\]
is compact, compactness claimed in \eqref{LE2} follows by a direct application of the Banach space version of Arzel\` a--Ascoli theorem.
	
First note that \eqref{es1} is obvious for the density $\vr$. 
As for $(\vt, \vu)$, we first write 
\[
(\vt, \vu) = (\vt - \tvt, \vu - \tvu) + (\tvt, \tvu), 
\]
where 
\[
(\tvu, \tvt) \in L^p(0,T; W^{2,q}(\Omega; R^4)) \cap 
W^{1,p}(0,T; L^q(\Omega; R^4)) 
\]
is en extension of the initial value, 
\[
(\tvt, \tvu)(0, \cdot) = (\vt, \vu)(0, \cdot). 
\]
Accordingly, we get the estimate 
\begin{equation} \label{as1}
\sup_{t \in (0,T)} \| (\vt - \tvt, \vu - \tvu) {(t, \cdot)} \|_{B^{2(1-\frac{1}{p})}_{q,p}(\Omega; R^4)}^p \aleq 
\int_0^T \left[ \| (\vt - \tvt, \vu - \tvu) \|^p_{W^{2,q}(\Omega; R^4)}+
\| \partial_t (\vt - \tvt, \vu - \tvu) \|^p_{W^{2,q}(\Omega; R^4)} \right] 
\end{equation}
As $(\vt - \tvt, \vu - \tvu)(0, \cdot) = 0$, the embedding constant is independent of $T$, cf. Denk \cite[Lemma 2.4]{Denk}.

Next, 
\[
\| (\vt, \vu)(0) \|_{B^{2(1 - \frac{1}{p})}_{q,p} }^p = \| (\tvt, \tvu)(0) \|_{B^{2(1 - \frac{1}{p})}_{q,p} }^p  = \inf \int_0^\infty \left[ \| \tvt, \tvu \|^p_{W^{2,q}(\Omega; R^4)}+
\| \partial_t (\tvt,\tvu) \|^p_{W^{2,q}(\Omega; R^4)} \right], 
\]
where the infimum is taken over all possible extensions $(\tvt, \tvu)$ of the initial data. Consequently, we may choose $(\tvt, \tvu)$ so that 
\[
\int_0^\infty \left[ \| \tvt, \tvu \|^p_{W^{2,q}(\Omega; R^4)}+
\| \partial_t (\tvt,\tvu) \|^p_{W^{2,q}(\Omega; R^4)} \right]  
\leq 2 \| (\vt, \vu)(0) \|_{B^{2(1 - \frac{1}{p})}_{q,p} }^p.
\]
Going back to \eqref{as1} we may infer that 
\begin{align} 
	\sup_{t \in (0,T)} &\| (\vt - \tvt, \vu - \tvu){ (t, \cdot)} \|_{B^{2(1-\frac{1}{p})}_{q,p}(\Omega; R^4)}^p \br&\aleq 
	\int_0^T \left[ \| (\vt, \vu) \|^p_{W^{2,q}(\Omega; R^4)}+
	\| \partial_t (\vt, \vu) \|^p_{W^{2,q}(\Omega; R^4)} \right] + 2 \| (\vt, \vu)(0) \|_{B^{2(1 - \frac{1}{p})}_{q,p} }^p. 
	\label{as2} 
\end{align}
Finally, we get 
\begin{align}
\sup_{t \in (0, T)}&\| (\tvt, \tvu)(t, \cdot) \|_{B^{2(1-\frac{1}{p})}_{q,p}(\Omega; R^4)}^p	\leq 
\sup_{t \in (0,\infty)}\| (\tvt, \tvu)(t, \cdot) \|_{B^{2(1-\frac{1}{p})}_{q,p}(\Omega; R^4)}^p\br  &\aleq \int_0^\infty \left[ \| \tvt, \tvu \|^p_{W^{2,q}(\Omega; R^4)}+
\| \partial_t (\tvt,\tvu) \|^p_{W^{2,q}(\Omega; R^4)} \right]   
\leq 2 \| (\vt, \vu)(0) \|_{B^{2(1 - \frac{1}{p})}_{q,p} }^p ,
\nonumber
\end{align}
which completes the proof of \eqref{es1}. Note that the norm introduced in \eqref{L3g} played a crucial role in the above arguments.

Finally, \eqref{LE3} follows from the compact embedding 
\[
W^{2,q}(\Omega; R^3) \hookrightarrow \hookrightarrow W^{1,\infty}(\Omega; R^3))
\]
combined with the Aubin--Lions compactness argument.
	
	\end{proof}	
	
Given the sequence of solutions $(\vre, \vte, \vue)_{\ep > 0}$, we define a sequence of \emph{control functionals} 
\[
\mathcal{F}_\ep (\tau) = \| (\vre, \vte, \vue) (\tau, \cdot) \|_{C(\Ov{\Omega}; R^5)} + \int_0^\tau \| (\vte, \vue) (t, \cdot) \|^p_{W^{1,\infty}(\Omega; R^4)} \dt
\]
for any $0 \leq \tau < T^\ep_{\rm max, V}$. 

Next, fix $M > 0$ large enough so that 
\begin{equation} \label{LE4}
	\mathcal{F}_\ep (0) =  \| (\vr^\ep_0, \vt^\ep_0, \vu^\ep_0)  \|_{C(\Ov{\Omega};R^5)} < \frac{M}{2} \ \mbox{uniformly for}\ \ep \to 0.
\end{equation}	
In accordance with Lemma \ref{LLE1}, $\mathcal{F}^\ep \in C[0,T]$ for any $T < T_{\rm max,V}^\ep$. 

Consider the ``hitting'' time 
\[
T_M^\ep = \sup_{ \tau \in [0, T_{\rm max})} \left\{  \mathcal{F}_\ep (\tau) < M  \right\}.
\]
By virtue of \eqref{LE4}, $T^\ep_M > 0$, and, in accordance with the conditional regularity criterion \eqref{c5}, we conclude, 
\[
0 < T_M^\ep < T^\ep_{{\rm max}, V}.
\]
Finally, we fix $T_{\rm eff} > 0$ as in \eqref{CR2}, specifically, 
\[
0 < T_{\rm eff} =  \Lambda^{-1} \left( \left\| (\vr_0, \vt_0, \vu_0) \right\|_{\mathcal{D}_{p,q,I}} + \left\| \frac{1}{\vr_0} \right\|_{C(\Ov{\Omega})} 
+ \| G \|_{W^{1,q}(\Omega)} + M \right).  
\]
In particular, $T_{\rm eff}$ is positive independent of $\ep$.

Set
\[
\overline{T}_\ep = \min\{ T^\ep_M; T_{\rm eff} \} > 0, 
\]
and 
\[
\Ov{\mathcal{F}}_\ep (\tau) = \mathcal{F}_\ep (\tau \wedge \Ov{T}_\ep),\ \tau \in [0, T_{\rm eff}].
\]

By virtue of the conditional regularity bound \eqref{CR1} and Lemma \ref{LLE1}, the sequence $( \Ov{\mathcal{F}}_\ep )_{\ep > 0}$ is precompact 
in $C[0,T_{\rm eff}]$, as it is precompact in $[0,T^\ep_M]$ independently of $\ep$.

Extracting a suitable subsequence we may assume 
\[
0 \leq \Ov{\mathcal{F}}_\ep \leq M,\ \Ov{\mathcal{F}}_\ep \to \mathcal{F} \ \mbox{in}\ C[0, T_{\rm eff}],\ 
\mathcal{F}(0) = \| (\vr_0, \vt_0, \vu_0)  \|_{C(\Ov{\Omega})} \leq \frac{M}{2}.
\]
Consequently, there exists $0< T \leq T_{\rm eff}$ such that 
\[
0 \leq \mathcal{F}(\tau) < M \ \mbox{for any}\ \tau \in [0,T] \ \Rightarrow \ 
0 \leq \Ov{\mathcal{F}}_\ep (\tau) < M \ \mbox{for any}\ \tau \in [0,T] 
\]
for all $\ep > 0$ small enough, meaning 
\begin{equation} \label{LE5} 
	\Ov{\mathcal{F}}_\ep(\tau) =  {\mathcal{F}}_\ep(\tau) = \| (\vre, \vte, \vue) (\tau, \cdot) \|_{C(\Ov{\Omega};R^5)} + \int_0^\tau \| (\vte ,\vue) (t, \cdot) \|^{ p}_{W^{1,\infty}(\Omega; R^4)} \dt < M \ \mbox{for any}\ \tau \in [0,T].
	\end{equation}
	
Finally, the estimate \eqref{LE5} along with the conditional regularity result \eqref{CR1} imply uniform bound,
\[
	\left\| (\vre, \vte, \vue) \right\|_{\mathcal{S}_{p,q} [0,T]} + \sup_{t \in [0, T]} \left\| \left(\frac{1}{\vre}, \frac{1}{\vte} \right) \right\|_{C(\Ov{\Omega})} 	\leq c(M) \ \mbox{uniformly for}\ \ep \to 0.
\]
Consequently, passing to a new subsequence as the case may be, we may suppose 
\begin{align} \
(\vte, \vue) &\to (\vt, \vu) \ \mbox{weakly in}\ L^p(0,T; W^{2,q}(\Omega; R^4)) \cap 
W^{1,p}(0,T; L^q(\Omega; R^4),\br \vre &\to \vr \ \mbox{weakly-(*) in}\ L^\infty(0,T; W^{1,q}(\Omega)).
\nonumber
\end{align}
In view of the strong convergence of the data postulated in \eqref{LE1}, and the compact embeddings stated in Lemma \ref{LLE1}, 
it is a routine matter to conclude that the triple $(\vr, \vt, \vu)$ is the strong solution of the NSF system in $[0,T]$ claimed in 
Theorem \ref{Tmain}. 

\subsection{Local existence in the mixed setting}

	\begin{Theorem}[{\bf Local existence - mixed framework}] \label{TEL2}
		
		Let 
		\[ 
			3 < q \leq 6
		\]
		be given.
		Let $\Omega \subset R^3$ be a bounded domain of class $C^4$, and let $G = G(x)$, $G \in W^{2,2}(\Omega)$ be a given potential. Let the initial data $(\vr_0, \vt_0, \vu_0)$ satisfy \eqref{MS5}, and let the boundary data $(\vu_B, \vt_B)$ belong to the class  \eqref{MS6}, \eqref{MS7}, respectively, 
		where 
		\[
		\min_{\Omega} \vr_0 \geq \underline{\vr} > 0,\ 
		\min_{\Omega} \vt_0 \geq \underline{\vt} > 0,\ \min_{\Gamma_D} \vtB \geq \underline{\vt} > 0,\  q_B \geq 0.
		\]
		In addition, let the compatibility conditions 
		\begin{align} 
			\vu_0|_{\partial \Omega} &= \vuB, \br
			\vt_0|_{\Gamma_D} &= \vtB,\ 
			\Grad \vt_0 \cdot \vc{n}|_{\Gamma_N} = q_B 
			\nonumber
		\end{align}
		hold. Finally, we suppose either $\Gamma_D \ne \emptyset$ or $q_B = 0$.

		Then there exists $T > 0$ such that the NSF system \eqref{p1}--\eqref{p6}, 
		with the initial conditions \eqref{p7}, and the boundary conditions \eqref{p8} 
		admits a unique solution $(\vr, \vt, \vu)$ in the class \eqref{MS1}--\eqref{MS3}.  
		
	\end{Theorem}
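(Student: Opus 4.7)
The plan is to follow exactly the template developed in Section \ref{exipq}, but replacing the $L^p-L^q$ conditional regularity estimate \eqref{CR1} by the mixed-setting estimate \eqref{c5a}. The key simplification is that \eqref{c5a} already controls the full $\mathcal{S}_{ChK}[0,T]$ norm in terms of the sole $C([0,T]\times\Ov{\Omega};R^5)$-norm of the solution, so there is no need for an auxiliary $L^p(0,T;W^{1,\infty})$ contribution in the control functional.

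First, approximate the data: construct sequences $(\vr_0^\ep,\vt_0^\ep,\vu_0^\ep) \in \mathcal{D}_{V,I}$, $(\vt_B^\ep,\vu_B^\ep,q_B^\ep) \in \mathcal{D}_{V,B}$ and potentials $G^\ep \in W^{3,2}(\Omega)$ that satisfy the full set of high-order compatibility conditions \eqref{L9}--\eqref{L12b}, that preserve the uniform lower bounds $\underline{\vr},\underline{\vt} > 0$ and the sign condition $q_B^\ep \geq 0$, and that converge to $(\vr_0,\vt_0,\vu_0)$, $(\vt_B,\vu_B,q_B)$, $G$ in $\mathcal{D}_{ChK,I}$, $\mathcal{D}_{ChK,B}$ and $W^{2,2}(\Omega)$ respectively. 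Proposition \ref{PL1} then delivers, for each $\ep > 0$, a unique strong solution $(\vre,\vte,\vue) \in \mathcal{S}_V[0,T^\ep_{\max,V}]$ that is in particular more regular than what will be needed in the limit.

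Next, define the control functional
\begin{equation*}
\mathcal{F}_\ep(\tau) = \left\| (\vre,\vte,\vue)(\tau,\cdot) \right\|_{C(\Ov{\Omega};R^5)},
\end{equation*}
fix $M > 0$ with $\mathcal{F}_\ep(0) < M/2$ uniformly in $\ep$, and introduce the hitting time $T^\ep_M = \sup\{\tau \in [0,T^\ep_{\max,V}) : \mathcal{F}_\ep(\tau) < M\}$. On $[0,T^\ep_M]$, the estimate \eqref{c5a} provides a bound on $\|(\vre,\vte,\vue)\|_{\mathcal{S}_{ChK}[0,T^\ep_M]}$ depending only on $M$, the norms of the data, and $\|(1/\vr_0,1/\vt_0)\|_{C(\Ov{\Omega})}$. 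A Lemma \ref{LLE1}-type argument (Aubin--Lions applied to the factors $C([0,T];W^{2,2})$ and $W^{1,2}(0,T;W^{1,2})$, together with $W^{2,2}(\Omega) \hookrightarrow\hookrightarrow C(\Ov{\Omega})$ in three dimensions, and analogously for $\vr_\ep$) yields a compact embedding $\mathcal{S}_{ChK}[0,T] \hookrightarrow\hookrightarrow C([0,T]\times\Ov{\Omega};R^5)$ with embedding constant uniform in $T$ on bounded intervals. Hence the extended functionals $\Ov{\mathcal{F}}_\ep(\tau) = \mathcal{F}_\ep(\tau \wedge T^\ep_M)$ are precompact in $C[0,T_0]$ for any fixed $T_0$, and extracting a subsequence gives a limit $\mathcal{F} \in C[0,T_0]$ with $\mathcal{F}(0) \leq M/2$. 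By continuity, there exists $T > 0$ such that $\mathcal{F}_\ep(\tau) < M$ on $[0,T]$ for all $\ep$ small enough, so the uniform $\mathcal{S}_{ChK}[0,T]$ bound is genuine on a common interval.

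The main technical obstacle is the construction of the Valli-class approximate data that satisfy the higher-order compatibility conditions \eqref{L11}--\eqref{L12b}; these do not follow automatically from the single-level compatibility available in the hypotheses, and the construction requires careful boundary modifications (typically a lifting argument near $\partial\Omega$ combined with mollification in the interior). Once this is in place, passage to the limit is routine: weak-$\ast$ limits in the spaces defining $\mathcal{S}_{ChK}$ combined with the strong convergence in $C([0,T]\times\Ov{\Omega};R^5)$ identify all nonlinear terms, so the limit $(\vr,\vt,\vu) \in \mathcal{S}_{ChK}[0,T]$ is a strong solution of \eqref{p1}--\eqref{p6} with the prescribed initial and boundary conditions. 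Uniqueness in this class follows from a standard energy estimate on the difference of two solutions, using the $L^\infty$ control of $(\vr,\vt,\vu)$ and $1/\vr$.
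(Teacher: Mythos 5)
Your proposal is correct and follows essentially the route the paper intends: the paper itself omits the proof, stating it is carried over from Section \ref{exipq} with Proposition \ref{PCR1} replacing Proposition \ref{PCR2}, which is exactly your scheme of Valli-class data approximation, Proposition \ref{PL1}, a hitting-time/control-functional argument, compactness, and passage to the limit. Your observation that the control functional can be reduced to the sup-norm alone (dropping the $L^p(0,T;W^{1,\infty})$ term) is consistent with the structure of estimate \eqref{c5a}, and your flagged issue about constructing approximate data satisfying the higher-order compatibility conditions \eqref{L9}--\eqref{L12b} is a point the paper likewise leaves implicit.
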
	

We omit the proof as it can be carried over in the same way as in the preceding section, with Proposition \ref{PCR1} in place of 
Proposition \ref{PCR2}. 

\begin{Remark} \label{ERER}
The conditional regularity estimates \eqref{c5a} established originally for smoother solutions extend by compactness/density arguments 
to the less regular solutions obtained in Theorem \ref{TEL2}.
\end{Remark}	
	
\section{Conditional regularity in the $L^p - L^q$ framework}
\label{R}

Our ultimate goal is to establish conditional regularity estimates in the $L^p-L^q$ framework and thus complete the proof of Theorem \ref{Tmain}.

	\begin{Theorem}[{\bf Conditional regularity - $L^p-L^q$ framework}] 
		\label{TR1}
		Let the hypotheses of Theorem \ref{Tmain} be satisfied.  
		
		Then there exists a non--decreasing (positive) function 
		$\Lambda : [0, \infty) \to (0, \infty)$ such that the solution $(\vr, \vt, \vu)$ admits the estimate 
				\begin{align} 
			&\left\| (\vr, \vt, \vu) \right\|_{\mathcal{S}_{p,q} [0,T]} + \sup_{t \in [0, T]} \left\| \left(\frac{1}{\vr}, \frac{1}{\vt} \right) {(t, \cdot)}\right\|_{C(\Ov{\Omega}; {R^2})} 
			\br &\leq 
			\Lambda \left(\left\| (\vr_0, \vt_0, \vu_0) \right\|_{\mathcal{D}_{p,q,I}} + \left\| (\vuB, \vtB, q_B) \right\|_{\mathcal{D}_{p,q,B}}  
			+ \left\| \left(\frac{1}{\vr_0}, \frac{1}{\vt_0} \right) \right\|_{C(\Ov{\Omega}; {R^2})} + \| G \|_{W^{1,q}(\Omega)} \right. \br 
			&+ \left.	  \left\| (\vr, \vt, \vu ) \right\|_{C([0,T] \times \Ov{\Omega}; R^5)}	\right).
			\label{R1}
		\end{align}
		for any $0 \leq T < T_{{\rm max}, p,q}$.	
	\end{Theorem}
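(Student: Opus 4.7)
The strategy is to eliminate the auxiliary $\int_0^\tau \|(\vt, \vu)\|^p_{W^{1, \infty}}\dt$ term on the right-hand side of \eqref{CR1} in Proposition \ref{PCR2} and then iterate the resulting purely conditional local bound to cover the full interval $[0, T_{\max, p, q})$. The essential ingredient is the Cho--Kim conditional regularity estimate \eqref{c5a}, which provides $L^2_t W^{1, \infty}_x$ control of $\vt, \vu$ in terms only of the data and the $L^\infty$-norm of the solution---no $W^{1, \infty}$-type quantity appears on the right-hand side.

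First, for almost every $t_\ast \in (0, T)$, the trace $(\vr(t_\ast), \vt(t_\ast), \vu(t_\ast))$ lies in $\mathcal{D}_{ChK,I}$: indeed, $\vt, \vu \in L^p(0, T; W^{2, q}) \hookrightarrow L^p(0, T; W^{2, 2})$ on the bounded domain $\Omega$ for $q > 3$. Restarting the system at $t_\ast$ in the Cho--Kim class and invoking \eqref{c5a}---extended from the Valli class to the Cho--Kim class as indicated in Remark \ref{ERER}---yields uniform control of $(\vr, \vt, \vu)$ in $\mathcal{S}_{ChK}[t_\ast, T]$ in terms of the Cho--Kim data at $t_\ast$, the potential $G$, and the $L^\infty$-norm of the solution. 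In particular, $\vt, \vu \in L^2(t_\ast, T; W^{2, q_0})$ for some $q_0 \in (3, 6]$, and hence $\vt, \vu \in L^2(t_\ast, T; W^{1, \infty})$ by Sobolev embedding.

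Next I would upgrade this $L^2_t$ control to $L^p_t$ control for general $p$ in the admissible range $p > 2q/(2q - 3)$. For $p \leq 2$ the inclusion is automatic on the finite interval. For $p > 2$ I would interpolate the two Cho--Kim bounds $\vu \in L^\infty(W^{2, 2}) \hookrightarrow L^\infty(W^{1, 6})$ and $\vu \in L^2(W^{2, q_0}) \hookrightarrow L^2(W^{1, \infty})$ exploiting the parabolic time regularity $\partial_t \vu \in L^\infty(L^2) \cap L^2(W^{1, 2})$ also furnished by \eqref{c5a}, via anisotropic space-time interpolation. The hypothesis $p(2q - 3) > 2q$ is precisely the sharp condition under which the interpolation closes and delivers a uniform $L^p(t_\ast, T; W^{1, \infty})$-bound in terms of data and $L^\infty$-norm only. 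Inserting this into \eqref{CR1} yields the $\mathcal{S}_{p, q}[t_\ast, t_\ast + T_{\rm eff}]$ estimate with $T_{\rm eff}$ uniformly bounded below by the data, $\|G\|_{W^{1, q}}$, and the $L^\infty$-norm. Thanks to the trace embedding $\mathcal{S}_{p, q}[0, T_{\rm eff}] \hookrightarrow C([0, T_{\rm eff}]; \mathcal{D}_{p, q, I})$ from Lemma \ref{LLE1}, whose constant is independent of $T_{\rm eff}$, each restart carries uniformly bounded initial data, so a finite number of iterations of Proposition \ref{PCR2} covers $[t_\ast, T]$. Letting $t_\ast \downarrow 0$ and pasting with the initial-data $\mathcal{S}_{p, q}$-bound on $[0, t_\ast]$ closes the argument.

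The main obstacle is the anisotropic interpolation upgrade from $L^2_t$ to $L^p_t$ of $W^{1, \infty}_x$ when $p > 2$: neither the spatial embedding $W^{2, q_0} \hookrightarrow W^{1, \infty}$ alone nor the energy-level $L^2_t$ integrability suffices, and one must exploit \emph{all} the components of \eqref{c5a} together with the sharp admissibility inequality $p(2q - 3) > 2q$. A secondary technical point is that the Cho--Kim norm of the trace at $t_\ast$ is \emph{a priori} only finite, not bounded uniformly in the data; this is handled by selecting $t_\ast$ through a Chebyshev-type argument from the integrability of $\|(\vt, \vu)\|^p_{W^{2, q}}$, and letting $t_\ast \downarrow 0$ so that the residual on $[0, t_\ast]$ is absorbed into $\|(\vr_0, \vt_0, \vu_0)\|_{\mathcal{D}_{p, q, I}}$ and the $L^\infty$-norm of the solution.
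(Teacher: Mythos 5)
The decisive step of your plan --- upgrading the Cho--Kim information to a uniform $L^p(t_\ast,T;W^{1,\infty})$ bound for \emph{every} admissible $p$ by ``anisotropic space--time interpolation'' --- does not work, and the hypothesis \eqref{LEa1} cannot rescue it. The estimate \eqref{c5a} gives $(\vt,\vu)\in L^\infty_t W^{2,2}_x\cap L^2_t W^{2,q_0}_x$ with $3<q_0\le 6$, together with $\partial_t(\vt,\vu)\in L^\infty_t L^2_x\cap L^2_t W^{1,2}_x$. Interpolating the two spatial bounds, $\|\vu\|_{W^{2,r}}\le\|\vu\|_{W^{2,2}}^{1-\theta}\|\vu\|_{W^{2,q_0}}^{\theta}$ with $1/r=(1-\theta)/2+\theta/q_0$, yields $\vu\in L^{2/\theta}_t W^{2,r}_x$, and the requirement $W^{2,r}\hookrightarrow W^{1,\infty}$ (i.e. $r>3$) forces $\theta>q_0/(3(q_0-2))$, hence $p=2/\theta<6(q_0-2)/q_0\le 4$. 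The time-derivative bounds do not help: combined with $L^2_t W^{2,q_0}_x$ they give at best continuity in time with values in spaces of order $3/2$ and integrability about $4q_0/(q_0+2)\le 3$, nowhere near $W^{1,\infty}$. Moreover \eqref{LEa1} is a \emph{lower} bound on $p$; its role in the paper is to control the quadratic term $\mathbb{S}(\Ds\vu):\Ds\vu$ and to guarantee $B^{2(1-\frac1p)}_{q,p}(\Omega)\hookrightarrow C(\Ov{\Omega})$ (Remark \ref{PR2}, end of the proof of Proposition \ref{PCR2}), not to enable any time-integrability upgrade, whereas the obstruction above appears precisely for large $p$, which Theorem \ref{TR1} must cover. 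The paper avoids the issue entirely: it feeds the Cho--Kim bound \eqref{c5a} (extended as in Remark \ref{ERER}) directly into the maximal regularity estimates \eqref{MAX1}--\eqref{MAX2} to obtain \eqref{R4} for a restricted range of exponents, and then extends to all $(p,q)$ satisfying \eqref{LEa1} by a bootstrap (``induction'') in the exponents; no $L^p_t W^{1,\infty}_x$ control functional is needed at that stage. Your scheme could be repaired in the same spirit (first moderate $p$, then re-run the maximal regularity with the improved integrability), but the one-shot interpolation, as stated, is a genuine gap.

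A second gap concerns the initial layer and the restart data. The Cho--Kim norm of the trace at $t_\ast$, selected by your Chebyshev argument from $\|(\vt,\vu)\|_{L^p(0,t_\ast;W^{2,q})}$, carries a factor $t_\ast^{-1/p}$; letting $t_\ast\downarrow 0$ makes it blow up, while fixing $t_\ast$ makes it (and hence your final $\Lambda$) depend on the solution itself rather than only on the admissible quantities (the data, $\|G\|$, and $\|(\vr,\vt,\vu)\|_{C([0,T]\times\Ov{\Omega};R^5)}$); note also that the Proposition \ref{PCR2} bound on $[0,t_\ast]$ still contains $\int_0^{t_\ast}\|(\vt,\vu)\|^p_{W^{1,\infty}}$, which is not an admissible quantity, so ``absorbing the residual'' is not automatic. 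The paper closes exactly this loophole with Lemma \ref{LR1}: a compactness/contradiction argument built on the control functionals, Lemma \ref{LLE1} and Proposition \ref{PCR2} produces a restart time $T(K)>0$ and a bound $\Lambda(K)$ on $\|(\vt,\vu)\|_{L^p(0,T;W^{2,q})}$ depending only on the size $K$ of the data, which makes the mean-value selection of the restart time uniform. Your proposal contains no substitute for this uniformity step.
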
	
	
Similarly to the above, $T_{{\rm max}, p,q}$ denotes the life span of the strong solution constructed in Section \ref{exipq}. As we shall see below, it is actually independent of the specific value of the exponents $p,q$.

The rest of this section is devoted to the proof of Theorem \ref{TR1}.	

\subsection{Estimates for more regular data}

We start with the case of more regular data, specifically, 
\[
(\vr_0, \vt_0, \vu_0) \in \mathcal{D}_{p,q,I} \cap \mathcal{D}_{ChK, I},\ 
(\vuB, \vtB, q_B) \in \mathcal{D}_{p,q,B} = \mathcal{D}_{ChK, B}, \ G \in W^{2,q}(\Omega).
\]
Accordingly, the solution satisfies the bound \eqref{c5a} from Proposition \ref{PCR1}, cf. also Remark \ref{ERER}:
		\begin{align} 
	\left\| (\vr, \vt, \vu) \right\|_{\mathcal{S}_{ChK} [0,T]} &+ \sup_{t \in [0,T]} \left\| \left(\frac{1}{\vr}, \frac{1}{\vt} \right) {(t, \cdot)}\right\|_{C(\Ov{\Omega}; {R^2})} 
	\br &\leq 
	\Lambda \left(T + \left\| (\vr_0, \vt_0, \vu_0) \right\|_{\mathcal{D}_{ChK,I}} + \left\| (\vuB, \vtB, q_B) \right\|_{\mathcal{D}_{ChK,B}}  
	+ \left\| \left(\frac{1}{\vr_0}, \frac{1}{\vt_0} \right) \right\|_{C(\Ov{\Omega}; {R^2})} \right. \br  &+ \left.  \left\| (\vr, \vt, \vu ) \right\|_{L^\infty((0,T) \times \Ov{\Omega}; R^5)} + \left\| G \right\|_{W^{2,q}(\Omega)}	\right),\ 3 < q \leq 6
	\label{R3}
\end{align}	
for any $0 < T < T_{{\rm max}, ChK}$.

Now, revisiting the maximal regularity estimates \eqref{MAX1}, \eqref{MAX2} we observe that 	 
\begin{align} 
\| (\vr, \vt, \vu) \|_{S_{p,q}[0,T]} &\leq \Lambda \left( \left\| (\vr, \vt, \vu) \right\|_{\mathcal{S}_{ChK} [0,T]} + \sup_{t \in [0,T]} \left\| \left(\frac{1}{\vr}, \frac{1}{\vt} \right) {(t, \cdot)}\right\|_{C(\Ov{\Omega}; {R^2})}   \right. \br 
&+ \left. T+ \left\| (\vr_0, \vt_0, \vu_0) \right\|_{\mathcal{D}_{p,q,I}} + \left\| (\vuB, \vtB, q_B) \right\|_{\mathcal{D}_{p,q,B}} + \left\| G \right\|_{W^{2,q}(\Omega)}   \right).
\label{R4} 
\end{align}	
Unlike \eqref{R3} that holds for a restricted range of $q$ amenable to the mixed class, the estimate 
\eqref{R4} can be extended to arbitrary $p,q$ satisfying \eqref{LEa1} by an induction argument. 

Thus the conclusion of Theorem \ref{TR1} holds for more regular initial data, specifically:  
		\begin{align} 
	&\left\| (\vr, \vt, \vu) \right\|_{\mathcal{S}_{p,q} [0,T]} + \sup_{t \in [0, T]} \left\| \left(\frac{1}{\vr}, \frac{1}{\vt} \right) {(t, \cdot)} \right\|_{C(\Ov{\Omega}; {R^2})} 
	\br &\leq 
	\Lambda \left(\left\| (\vr_0, \vt_0, \vu_0) \right\|_{\mathcal{D}_{p,q,I} \cap \mathcal{D}_{ChK,I}} + \left\| (\vuB, \vtB, q_B) \right\|_{\mathcal{D}_{p,q,B}}  
	+ \left\| \left(\frac{1}{\vr_0}, \frac{1}{\vt_0} \right) \right\|_{C(\Ov{\Omega}; { R^2})} \right. \br &+  \left. \left\| (\vr, \vt, \vu ) \right\|_{L^\infty((0,T) \times \Ov{\Omega}; R^5)} + \| G \|_{W^{2,q}(\Omega)}	\right).
	\label{R5}
	\end{align}

\subsection{Initial smoothing}	

In view of the bound \eqref{R5}, the proof of Theorem \ref{TR1} is complete as soon as we observe that solutions emanating from the data in $\mathcal{D}_{p,q, I}$ will immediately become more regular, namely 
\begin{equation} \label{pr16}
(\vr (\tau, \cdot), \vt(\tau, \cdot), \vu(\tau, \cdot)) \in \mathcal{D}_{p,q,I} \cap \mathcal{D}_{ChK,I} 
\end{equation}
for any $\tau > 0$. This is obviously the case for the density, where the two classes coincide, while for $\vu$, $\vt$, we have
\[
(\vt, \vu) \in L^p (0,T; W^{2,q}(\Omega; R^4)).
\]
As $q > 3$, the relation \eqref{pr16} holds for a.a. $\tau \in (0,T)$. Moreover, by the mean value theorem, 
\[
\| (\vt, \vu)(\xi, \cdot) \|_{W^{2,q}(\Omega; R^4)}^p = \frac{1}{T} \| (\vt, \vu) \|_{L^p(0,T; W^{2,q}(\Omega; R^4))}^p \ \mbox{for some}\ 
\xi \in [0,T].
\]

Consequently, the the proof of Theorem \ref{Tmain} is complete as soon as we establish the following result. 

\begin{Lemma} \label{LR1}
Consider a bounded set of initial/boundary data 
\[
\mathcal{B} = \left\{ (\vr_0, \vt_0, \vu_0, \vt_B, \vu_B, q_B) \ \Big| \| (\vr_0, \vt_0, \vu_0) \|_{\mathcal{D}_{p,q,I}} + 
\|( \vt_B, \vu_B, q_B) \|_{\mathcal{D}_{p,q,B}} \leq K
\right\}.
\]

Then there exists $T(K) > 0$ such that the local solutions $(\vr, \vt, \vu)$ corresponding to the data in $\mathcal{B}$ exist on $[0,T]$, and 
\[
\| (\vt, \vu) \|_{L^p(0,T; W^{2,q}(\Omega; R^4))}^p < \Lambda (K).
\]

	\end{Lemma}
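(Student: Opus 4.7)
The plan is to revisit the hitting-time scheme from Section~\ref{exipq} and make every estimate uniform over $\mathcal B$. Since $q>3$ and hypothesis~\eqref{LEa1} holds, $\mathcal D_{p,q,I}\hookrightarrow C(\Ov\Omega;R^5)$ (cf.\ Remark~\ref{PR2}); combined with the uniform positive lower bounds $\vr_0\geq\underline\vr$, $\vt_0\geq\underline\vt$ built into $\mathcal B$, this yields
\[
M_0:=\sup_{\mathcal B}\bigl(\|(\vr_0,\vt_0,\vu_0)\|_{C(\Ov\Omega;R^5)}+\|(1/\vr_0,1/\vt_0)\|_{C(\Ov\Omega;R^2)}\bigr)\leq c(K).
\]
I fix $M:=4M_0+1$. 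To each datum in $\mathcal B$ and its associated local solution $(\vr,\vt,\vu)$ from Section~\ref{exipq} I attach the control functional and hitting time
\[
\mathcal F(\tau):=\|(\vr,\vt,\vu)(\tau,\cdot)\|_{C(\Ov\Omega;R^5)}+\int_0^\tau\|(\vt,\vu)\|_{W^{1,\infty}(\Omega;R^4)}^p\,dt,\qquad T_M:=\sup\{\tau:\mathcal F(\tau)<M\}.
\]
Continuity of $\mathcal F$ (via Lemma~\ref{LLE1} and~\eqref{L18}) together with $\mathcal F(0)\leq M/4$ gives $T_M>0$. Applying Proposition~\ref{PCR2} with the running bound $\mathcal F<M$ produces a uniform lower bound $T_{\mathrm{eff}}\geq T_0(K,M)>0$ and the estimate
\[
\|(\vr,\vt,\vu)\|_{\mathcal S_{p,q}[0,T_M\wedge T_0]}\leq\Lambda(K+M)=:L.
\]

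It remains to bound $T_M$ below uniformly in $\mathcal B$, and for this I would establish a uniform modulus of continuity of $\mathcal F$ at $0$ across the family. For the integral term, I would interpolate between the uniform $L^p(W^{2,q})$ bound and the uniform $L^\infty_t(B^{2(1-1/p)}_{q,p})$ bound contained in the $\mathcal S_{p,q}$ norm~\eqref{L3g}; hypothesis~\eqref{LEa1} is tailored so that the resulting interpolation space embeds into $W^{1,\infty}$, producing $(\vt,\vu)\in L^{p'}(0,T_M\wedge T_0;W^{1,\infty})$ for some $p'>p$ with norm $\leq c(L)$ uniformly on $\mathcal B$, and H\"older's inequality then delivers
\[
\int_0^\tau\|(\vt,\vu)\|_{W^{1,\infty}}^p\,dt\leq\tau^{1-p/p'}\,c(L).
\]
For the pointwise $C(\Ov\Omega)$ term, the uniform bound $\partial_t(\vr,\vt,\vu)\in L^p(0,T_M\wedge T_0;L^q)$ yields $\|(\vr,\vt,\vu)(\tau,\cdot)-(\vr_0,\vt_0,\vu_0)\|_{L^q}\leq\tau^{1-1/p}L$; interpolating with the uniform $L^\infty_t W^{1,q}_x$ bound and invoking $W^{1,q}\hookrightarrow C(\Ov\Omega)$ (since $q>3$) gives
\[
\|(\vr,\vt,\vu)(\tau,\cdot)-(\vr_0,\vt_0,\vu_0)\|_{C(\Ov\Omega;R^5)}\leq c(L)\,\tau^\gamma
\]
for some $\gamma>0$. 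Combining, $\mathcal F(\tau)-\mathcal F(0)\to 0$ uniformly on $\mathcal B$ as $\tau\to 0^+$; since $\mathcal F(0)\leq M/4$, this produces $T(K)>0$ with $\mathcal F<M$ on $[0,T(K)]$ for every datum in $\mathcal B$. Hence $T_M\geq T(K)$, and the previous $\mathcal S_{p,q}$ bound yields $\|(\vt,\vu)\|^p_{L^p(0,T(K);W^{2,q})}\leq L^p\leq\Lambda(K)$ after redefining $\Lambda$.

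The main obstacle is the uniform modulus-of-continuity step: promoting the pointwise-in-datum $\mathcal S_{p,q}$ bound into a statement that is uniform across the full family $\mathcal B$. The higher-integrability route $L^{p'}(W^{1,\infty})$ depends quantitatively on~\eqref{LEa1}; a conceptually cleaner alternative is to invoke the compact embedding in Lemma~\ref{LLE1} and note that the family $\{(\vt,\vu)\}$, suitably extended past $T_M$, is relatively compact in $L^p(0,T_0;W^{1,\infty})$, which automatically delivers equi-integrability of $\|(\vt,\vu)\|_{W^{1,\infty}}^p$. Either approach requires careful tracking to ensure all constants depend only on $K$ and on the prescribed $M=M(K)$, and that the extensions chosen past the hitting time do not spoil the $W^{1,p}(L^q)$ regularity.
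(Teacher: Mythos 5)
Your proposal follows the same skeleton as the paper (control functional $\mathcal F$, hitting time $T_M$, uniform $T_{\rm eff}$ and the $\mathcal S_{p,q}$ bound from Proposition \ref{PCR2}), but it diverges at the crucial step of bounding $T_M$ from below uniformly over $\mathcal B$. The paper does this by contradiction: it takes a sequence of data with $T_M^\ep \to 0$, truncates the control functionals at the hitting times, and uses the $T$--independent compact embeddings of Lemma \ref{LLE1} together with Arzel\`a--Ascoli to pass to a limit functional that stays below $M$ on some $[0,\alpha]$, contradicting $T_M^\ep \to 0$. You instead propose a direct quantitative argument: a uniform modulus of continuity of $\mathcal F$ at $\tau=0$ obtained by interpolating the $L^p(W^{2,q})$ bound against the $T$--independent $C([0,T];B^{2(1-1/p)}_{q,p})$ bound (H\"older in time for the integral term) and by combining $\partial_t(\vr,\vt,\vu)\in L^p(L^q)$ with a uniform-in-time spatial bound above the $C(\Ov\Omega)$--embedding threshold for the pointwise term. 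This is a legitimate alternative: it avoids the sequence/contradiction step and yields an explicit rate $\tau^\gamma$, at the price of a more delicate bookkeeping of interpolation exponents, which is exactly where hypothesis \eqref{LEa1} enters; the paper's compactness route is softer and handles both terms of $\mathcal F$ at once without any rate.

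Two points need repair or clarification. First, for $(\vt,\vu)$ you invoke a ``uniform $L^\infty_t W^{1,q}_x$ bound''; this is not available from the $\mathcal S_{p,q}$ framework when $2(1-\tfrac1p)<1$ (i.e.\ $p<2$, which \eqref{LEa1} permits), since $B^{2(1-\frac1p)}_{q,p}\not\hookrightarrow W^{1,q}$ then. The fix is immediate and stays within your scheme: interpolate $\|(\vt,\vu)(\tau)-(\vt_0,\vu_0)\|_{L^q}\lesssim \tau^{1-1/p}$ against the uniform $C([0,T];B^{2(1-\frac1p)}_{q,p})$ bound furnished (with $T$--independent constant) by Lemma \ref{LLE1}/\eqref{L18}; since $2(1-\tfrac1p)>\tfrac3q$ by \eqref{LEa1}, an intermediate Besov space still embeds into $C(\Ov\Omega)$ and the $\tau^\gamma$ modulus survives. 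Second, the lower bounds $\vr_0\ge\underline\vr$, $\vt_0\ge\underline\vt$ are not written into the displayed definition of $\mathcal B$; you assume them uniformly, which is indeed how the paper uses the lemma (its $T_{\rm eff}=\Lambda^{-1}(K+M)$ likewise presupposes $\|1/\vr_0\|_{C(\Ov\Omega)}$ controlled by $K$), but this should be stated explicitly. With these adjustments your argument is sound and constitutes a genuinely different, quantitative proof of the uniformity that the paper obtains by compactness.
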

	
\begin{proof}
	
The arguments are similar to the existence part of the proof of Theorem \ref{Tmain}.  	
Given the data 
\[
(\vr_0, \vt_0, \vu_0, \vt_B, \vu_B, q_B) \in \mathcal{B}, 
\]
we set 
\[
\mathcal{F}[ \vr_0, \vt_0, \vu_0, \vt_B, \vu_B, q_B] (\tau) = \| (\vr, \vt, \vu)(\tau, \cdot) \|_{C(\Ov{\Omega};R^5)} + 
\int_0^\tau \| (\vt, \vu) (t, \cdot) \|^{{p}}_{W^{1,\infty}(\Omega; R^4)} \dt,  
\]
where $(\vr, \vt, \vu)$ is the corresponding strong solution. 	

Now, we fix $T_{\rm eff}$ as in \eqref{CR2}, 
\[
T_{\rm eff} = \Lambda^{-1}(K + M) > 0, 
\]
where $K$ is the ``radius'' of the data set and $M = M(K) > 0$ is a given number chosen in such a way that
\begin{equation} \label{R6}
\sup \left\{ 	\mathcal{F}[ \vr_0, \vt_0, \vu_0; \vt_B, \vu_B, q_B] (0) \Big| \ (\vr_0, \vt_0, \vu_0, \vt_B, \vu_B, q_B) \in \mathcal{B} \right\} \leq 
\frac{M}{2}.
	\end{equation}
	
Next, we define 
\[
T_M = 
T_M [ \vr_0, \vt_0, \vu_0; \vt_B, \vu_B, q_B] = \sup \left\{ 0 \leq \tau \leq T_{\rm eff} \Big|\ \mathcal{F}[ \vr_0, \vt_0, \vu_0; \vt_B, \vu_B, q_B](\tau) < M \right\}.
\]	
Note carefully that $T_M > 0$ because of \eqref{R6}.
Finally, we set 
\[
T = T(K) = \inf \left\{ T_M [ \vr_0, \vt_0, \vu_0; \vt_B, \vu_B, q_B] \ \Big| \ (\vr_0, \vt_0, \vu_0; \vt_B, \vu_B, q_B) \in \mathcal{B} \right\}. 
\]
In view of the conditional regularity estimates \eqref{CR1} established in Proposition \ref{PCR2}, the time $T$ meets the conclusion of 
Lemma \ref{LR1} as soon as we show $T > 0$.

Arguing by contradiction, we construct a sequence of data 
\[
( \vr^\ep_0, \vt^\ep_0, \vu^\ep_0; \vt^\ep_B, \vu^\ep_B, q^\ep_B) \in \mathcal{B} 
\]
such that
\begin{equation} \label{R7}
T_M^\ep =	T_M [\vr^\ep_0, \vt^\ep_0, \vu^\ep_0; \vt^\ep_B, \vu^\ep_B, q^\ep_B] \to 0 \ \mbox{as} \ \ep \to 0. 
\end{equation}
We consider the associated sequence 
\[
\mathcal{F}_\ep (\tau) = \mathcal{F}[ \vr^\ep_0, \vt^\ep_0, \vu^\ep_0, \vt^\ep_B, \vu^\ep_B, q^\ep_B](\tau),  
\]
with 
\[
\Ov{\mathcal{F}}_\ep (\tau) = \mathcal{F}_\ep (\tau \wedge T^\ep_M). 
\]
Now, similarly to the existence part of the proof of Theorem \ref{Tmain}, we use Lemma \ref{LLE1} to conclude 
\[
\Ov{\mathcal{F}}_\ep \to \mathcal{F} \in C[0, T_{\rm eff}], 
\]
where 
\[
0 \leq \mathcal{F}(\tau) < M \ \mbox{on some interval} \ [0, \alpha] ,\ \alpha > 0.
\]
This implies 
\[
0 \leq \Ov{\mathcal{F}}_\ep(\tau) = \mathcal{F}_\ep (\tau) < M \ \mbox{in} \ [0, \alpha], 
\]
meaning $T_M^\ep > \alpha > 0$ in contrast with \eqref{R7}.

	\end{proof}	
	
\centerline{\bf Acknowledgement}

	This work was done while the authors were participating in the Oberwolfach 
	Research Fellows Program at Mathematisches Forschungsinstitut Oberwolfach. They warmly
	thank the Institute for its kind hospitality and the excellent research environment it provided.
	
	\def\cprime{$'$} \def\ocirc#1{\ifmmode\setbox0=\hbox{$#1$}\dimen0=\ht0
		\advance\dimen0 by1pt\rlap{\hbox to\wd0{\hss\raise\dimen0
				\hbox{\hskip.2em$\scriptscriptstyle\circ$}\hss}}#1\else {\accent"17 #1}\fi}


\begin{thebibliography}{10}
		
		\bibitem{AbBaCh}
		A.~Abbatiello, D.~Basari{\' c}, and N.~Chaudhuri.
		\newblock On a blow-up criterion for the {N}avier-{S}tokes-{F}ourier system
		under general equations of state.
		\newblock {\em Archive Preprint Series}, 2023.
		\newblock {\bf arxiv preprint No. 2310.12230 }.
		
		\bibitem{Amann1}
		H.~Amann.
		\newblock Nonhomogeneous linear and quasilinear elliptic and parabolic boundary
		value problems.
		\newblock In {\em Function spaces, differential operators and nonlinear
			analysis ({F}riedrichroda, 1992)}, volume~{\bf 133} of {\em Teubner-Texte
			Math.}, pages 9--126. Teubner, Stuttgart, 1993.
		
		\bibitem{AMA}
		H.~Amann.
		\newblock {\em Linear and quasilinear parabolic problems, {I}}.
		\newblock Birkh{\" a}user Verlag, Basel, 1995.
		
		\bibitem{Amann2000}
		H.~Amann.
		\newblock Compact embeddings of vector-valued {S}obolev and {B}esov spaces.
		\newblock volume 35(55), pages 161--177. 2000.
		\newblock Dedicated to the memory of Branko Najman.
		
		\bibitem{BaFeMi}
		D.~Basari\'{c}, E.~Feireisl, and H.~Mizerov\'{a}.
		\newblock Conditional regularity for the {N}avier-{S}tokes-{F}ourier system
		with {D}irichlet boundary conditions.
		\newblock {\em J. Differential Equations}, {\bf 365}:359--378, 2023.
		
		\bibitem{BreMir}
		H.~Brezis and P.~Mironescu.
		\newblock Gagliardo-{N}irenberg inequalities and non-inequalities: the full
		story.
		\newblock {\em Ann. Inst. H. Poincar\'e{} C Anal. Non Lin\'eaire}, {\bf
			35}(5):1355--1376, 2018.
		
		\bibitem{ChoKim1}
		Y.~Cho and H.~Kim.
		\newblock Existence results for viscous polytropic fluids with vacuum.
		\newblock {\em J. Differential Equations}, {\bf 228}(2):377--411, 2006.
		
		\bibitem{DanchCPDE}
		R.~Danchin.
		\newblock Local theory in critical spaces for compressible viscous and
		heat-conductive gases.
		\newblock {\em Comm. Partial Differential Equations}, {\bf 26}(7-8):1183--1233,
		2001.
		
		\bibitem{Danch2010}
		R.~Danchin.
		\newblock On the solvability of the compressible {N}avier-{S}tokes system in
		bounded domains.
		\newblock {\em Nonlinearity}, {\bf 23}(2):383--407, 2010.
		
		\bibitem{Denk}
		R.~Denk.
		\newblock An introduction to maximal regularity for parabolic evolution
		equations.
		\newblock In {\em Nonlinear partial differential equations for future
			applications}, volume 346 of {\em Springer Proc. Math. Stat.}, pages 1--70.
		Springer, Singapore, [2021] \copyright2021.
		
		\bibitem{DEHIEPR}
		R.~Denk, M.~Hieber, and J.~Pr{\" u}ss.
		\newblock Optimal ${L}^p-{L}^q$-estimates for parabolic boundary value problems
		with inhomogenous data.
		\newblock {\em Math. Z.}, {\bf 257}:193--224, 2007.
		
		\bibitem{FaJiOu}
		J.~Fan, S.~Jiang, and Y.~Ou.
		\newblock A blow-up criterion for compressible viscous heat-conductive flows.
		\newblock {\em Ann. Inst. H. Poincar\'e Anal. Non Lin\'eaire}, {\bf
			27}(1):337--350, 2010.
		
		\bibitem{FaZiZh}
		D.~Fang, R.~Zi, and T.~Zhang.
		\newblock A blow-up criterion for two dimensional compressible viscous
		heat-conductive flows.
		\newblock {\em Nonlinear Anal.}, {\bf 75}(6):3130--3141, 2012.
		
		\bibitem{FeNoSun1}
		E.~Feireisl, A.~Novotn{\'y}, and Y.~Sun.
		\newblock A regularity criterion for the weak solutions to the
		{N}avier-{S}tokes-{F}ourier system.
		\newblock {\em Arch. Ration. Mech. Anal.}, {\bf 212}(1):219--239, 2014.
		
		\bibitem{FeWeZh}
		E.~Feireisl, H.~Wen, and C.~Zhu.
		\newblock On {N}ash's conjecture for models of viscous, compressible, and heat
		conducting fluids.
		\newblock {\em IM ASCR Prague, preprint No. IM 2022 6}, 2022. 
		
		\bibitem{HOF1}
		D.~Hoff.
		\newblock Global solutions of the {N}avier-{S}tokes equations for
		multidimensional compressible flow with discontinuous initial data.
		\newblock {\em J. Differential Equations}, {\bf 120}:215--254, 1995.
		
		\bibitem{KagKaw}
		Y.~Kagei and S.~Kawashima.
		\newblock Local solvability of an initial boundary value problem for a
		quasilinear hyperbolic-parabolic system.
		\newblock {\em J. Hyperbolic Differ. Equ.}, {\bf 3}(2):195--232, 2006.
		
		\bibitem{KOT6}
		M.~Kotschote.
		\newblock Strong solutions to the compressible non-isothermal {N}avier-{S}tokes
		equations.
		\newblock {\em Adv. Math. Sci. Appl.}, {\bf 22}(2):319--347, 2012.
		
		\bibitem{Krylov}
		N.~V. Krylov.
		\newblock Parabolic equations with {VMO} coefficients in {S}obolev spaces with
		mixed norms.
		\newblock {\em J. Funct. Anal.}, {\bf 250}(2):521--558, 2007.
		
		\bibitem{MANI}
		A.~Matsumura and T.~Nishida.
		\newblock The initial value problem for the equations of motion of compressible
		and heat conductive fluids.
		\newblock {\em Comm. Math. Phys.}, {\bf 89}:445--464, 1983.
		
		\bibitem{Nash2}
		J.~Nash.
		\newblock Continuity of solutions of parabolic and elliptic equations.
		\newblock {\em Amer. J. Math.}, {\bf 80}:931--954, 1958.
		
		\bibitem{NAS}
		J.~Nash.
		\newblock Le probl{\` e}me de cauchy pour les {\' e}quations diff{\'
			e}rentielles d'un fluide g{\' e}n{\' e}ral.
		\newblock {\em Bull. Soc. Math. France}, {\bf 90}:487--497, 1962.
		
		\bibitem{Schlag}
		W.~Schlag.
		\newblock Schauder and {$L^p$} estimates for parabolic systems via {C}ampanato
		spaces.
		\newblock {\em Comm. Partial Differential Equations}, {\bf 21}(7-8):1141--1175,
		1996.
		
		\bibitem{SoloI}
		V.~A. Solonnikov.
		\newblock The solvability of the initial-boundary value problem for the
		equations of motion of a viscous compressible fluid.
		\newblock {\em Zap. Nau\v{c}n. Sem. Leningrad. Otdel. Mat. Inst. Steklov.
			(LOMI)}, {\bf 56}:128--142, 197, 1976.
		\newblock Investigations on linear operators and theory of functions, VI.
		
		\bibitem{SuWaZh1}
		Y.~Sun, C.~Wang, and Z.~Zhang.
		\newblock A {B}eale-{K}ato-{M}ajda criterion for the 3-{D} compressible
		{N}avier-{S}tokes equations.
		\newblock {\em J. Math. Pures Appl.}, {\bf 95}(1):36--47, 2011.
		
		\bibitem{Vall2}
		A.~Valli.
		\newblock A correction to the paper: ``{A}n existence theorem for compressible
		viscous fluids'' [{A}nn. {M}at. {P}ura {A}ppl. (4) {\bf 130} (1982),
		197--213;\ {MR} 83h:35112].
		\newblock {\em Ann. Mat. Pura Appl. (4)}, {\bf 132}:399--400 (1983), 1982.
		
		\bibitem{Vall1}
		A.~Valli.
		\newblock An existence theorem for compressible viscous fluids.
		\newblock {\em Ann. Mat. Pura Appl. (4)}, {\bf 130}:197--213, 1982.
		
		\bibitem{VAZA}
		A.~Valli and M.~Zajaczkowski.
		\newblock {N}avier-{S}tokes equations for compressible fluids: Global existence
		and qualitative properties of the solutions in the general case.
		\newblock {\em Commun. Math. Phys.}, {\bf 103}:259--296, 1986.
		
	\end{thebibliography}

\end{document}